\newtheorem{theorem}{Theorem}[section]
\newtheorem{lemma}[theorem]{Lemma}
\newtheorem{prop}[theorem]{Proposition}
\theoremstyle{definition}
\newtheorem{defn}[theorem]{Definition}
\newtheorem{remark}[theorem]{Remark}
\numberwithin{equation}{theorem}
\newcommand{\CC}{\mathbb{C}}
\newcommand{\QQ}{\mathbb{Q}}
\newcommand{\A}{\bm{\mathrm{A}}}
\newcommand{\ZZ}{\mathbb{Z}}
\newcommand{\Gm}{\mathbb{G}_m}
\DeclareMathOperator{\Br}{Br}
\DeclareMathOperator{\Gal}{Gal}
\DeclareMathOperator{\Hom}{Hom}
\DeclareMathOperator{\im}{Im}
\DeclareMathOperator{\Pic}{Pic}
\DeclareMathOperator{\Spec}{Spec}
\DeclareMathOperator{\SL}{SL}
\DeclareMathOperator{\Ker}{Ker}
\def\easycyrsymbol#1{\mathord{\mathchoice
  {\mbox{\fontsize\tf@size\z@\usefont{T2A}{\rmdefault}{m}{n}#1}}
  {\mbox{\fontsize\tf@size\z@\usefont{T2A}{\rmdefault}{m}{n}#1}}
  {\mbox{\fontsize\sf@size\z@\usefont{T2A}{\rmdefault}{m}{n}#1}}
  {\mbox{\fontsize\ssf@size\z@\usefont{T2A}{\rmdefault}{m}{n}#1}}
}}
\newcommand{\Be}{\easycyrsymbol{\CYRB}}
\newcommand{\Sha}{\easycyrsymbol{\CYRSH}}
\title{Weak approximation for homogeneous spaces over some two-dimensional geometric global fields}
\author{Haowen Zhang}
\begin{document}

\maketitle
\begin{abstract}
    In this article, we study obstructions to weak approximation for connected linear groups and homogeneous spaces with connected or abelian stabilizers over finite extensions of $\CC((x,y))$ or function fields of curves over $\CC((t))$. We show that for connected linear groups, the usual Brauer-Manin obstruction works as in the case of tori. However, this Brauer-Manin obstruction is not enough for homogeneous spaces, as shown by the examples we give. We then construct an obstruction using torsors under quasi-trivial tori that explains the failure of weak-approximation. 
\end{abstract}
\section{Introduction}
Given an algebraic variety $Z$ over a number field $K$, we say that $Z$ satisfies weak approximation if $Z(K)$ is dense in $\prod_{v\in{\Omega}}Z(K_v)$ with respect to the product of the $v$-adic topologies, where $K_v$ denotes the $v$-adic completion and $\Omega$ denotes the set of places of $K$. Weak approximation is satisfied for simply connected groups, and when it is not satisfied, we want to characterize the  closure $\overline{Z(K)}$ of the set of rational points in $\prod_{v\in{\Omega}}Z(K_v)$ as some subset of elements satisfying certain compatibility conditions, for example, as the Brauer-Manin set $Z(K_{\Omega})^{\Br}$ (c.f. Section 8.2 of \cite{poonen2017rational}), and this gives an obstruction to weak approximation. 
\par
Going beyond number fields, recently there's been an increasing interest in studying analogous questions over some two-dimensional geometric global fields.  As in \cite{izquierdo2021local}, we consider a field $K$ of one of the following two types:
\begin{enumerate}[(a)]
    \item the function field of a smooth projective curve $C$ over $\CC((t))$;
    \item the fraction field of a local, henselian, two-dimensional, excellent domain $A$ with algebraically closed residue field of characteristic 0 (e.g. any finite extension of $\CC((x,y))$, the field of Laurent series in two variables over the field of complex numbers).
\end{enumerate}
In case (a), one can consider the set $\Omega$ of valuations coming from the closed points of the curve $C$. Colliot-Thélène/Harari (c.f. \cite{Colliot_Th_l_ne_2015}) proved the following exact sequence describing the obstruction to weak approximation for the case of tori $T$:
\begin{equation}\label{main}
1\rightarrow\overline{T(K)}\rightarrow\prod_{v\in\Omega}T(K_v)\rightarrow\Be_\omega(T)^D\rightarrow\Be(T)^D\rightarrow 1
\end{equation}
where $\Be(Z)$ (resp. $\Be_\omega(Z)$) is defined to be the subgroup of $\Br_1(Z)/\Br(K)$ containing elements vanishing in $\Br_1(Z_v)/\Br(K_v)$ for all places (resp. almost all places) $v\in C^{(1)}$, and $A^{D}$ denotes the group of continuous homomorphisms from $A$ to $\QQ/\ZZ(-1)$. \par
In case (b), one can take $\Omega$ to be the set of valuations coming from prime ideals of height one in $A$. Izquierdo (c.f. \cite{izquierdo2019dualite}) proved the exact sequence \ref{main} in such situations for tori $T$.
\par Using some dévissage arguments as in \cite{Colliot_Th_l_ne_2015}, we can first generalize this result to a connected linear group $G$ over $K$.
\begin{theorem}(Theorem \ref{4.1})
We keep the notation as above. For a connected linear group $G$ over $K$ of the type (a) or (b), there is an exact sequence
\begin{equation}
1\rightarrow\overline{G(K)}\rightarrow G(K_\Omega)\rightarrow\Be_\omega(G)^D\rightarrow\Be(G)^D\rightarrow 1.
\end{equation}
\end{theorem}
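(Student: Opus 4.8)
The plan is a dévissage in the spirit of Sansuc and of \cite{Colliot_Th_l_ne_2015}, reducing the statement for a general connected linear $G$ to the already-known case of tori (the sequence \ref{main}) together with standard facts on semisimple simply connected groups over $K$ and its completions. Note first that the inclusion $\overline{G(K)}\subseteq\ker\!\big(G(K_\Omega)\to\Be_\omega(G)^D\big)$ and the continuity of the relevant pairing are formal consequences of the reciprocity law underlying \ref{main}, so the content is always the reverse inclusion together with the surjectivity $\Be_\omega(G)^D\twoheadrightarrow\Be(G)^D$. \emph{Step 1 (reduction to reductive $G$).} Write $1\to R_u(G)\to G\to G^{\mathrm{red}}\to 1$; since $\operatorname{char}K=0$, the unipotent radical is an iterated extension of copies of $\Ga$, so $H^1(F,R_u(G))=0$ for $F=K$ and $F=K_v$ and $G(F)\to G^{\mathrm{red}}(F)$ is onto, while $\Br$ and $\Pic$ are insensitive to an affine-space factor, giving $\Be(G)=\Be(G^{\mathrm{red}})$ and $\Be_\omega(G)=\Be_\omega(G^{\mathrm{red}})$. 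The open surjection $G(K_\Omega)\twoheadrightarrow G^{\mathrm{red}}(K_\Omega)$ then carries $\overline{G(K)}$ onto $\overline{G^{\mathrm{red}}(K)}$, so the exact sequence for $G$ follows from that for $G^{\mathrm{red}}$, and I may assume $G$ reductive.

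\emph{Step 2 ($z$-extension).} Choose a $z$-extension $1\to T_1\to G_1\to G\to 1$ with $T_1$ a quasi-trivial torus and $G_1$ reductive with $G_1^{\mathrm{der}}$ simply connected; this exists over any field. Quasi-trivial tori are $K$-rational, hence satisfy weak approximation and have $\Be(T_1)=\Be_\omega(T_1)=0$, and $H^1(F,T_1)=0$ for $F=K,K_v$ by Hilbert 90 and Shapiro's lemma; in particular $T_1(K_\Omega)=\overline{T_1(K)}\subseteq\overline{G_1(K)}$. Consequently $G_1(F)\to G(F)$ is onto, and passing to the quotient by the closed subgroup $T_1(K_\Omega)$ one gets an isomorphism $G_1(K_\Omega)/\overline{G_1(K)}\xrightarrow{\ \sim\ }G(K_\Omega)/\overline{G(K)}$; moreover the exact sequences of Sansuc relating the Picard and Brauer groups of a linear group to those of a torus quotient (using $\Pic(\overline{T_1})=0$) yield $\Be(G)\cong\Be(G_1)$ and $\Be_\omega(G)\cong\Be_\omega(G_1)$ compatibly with the pairings. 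Hence it suffices to prove the theorem for $G_1$.

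\emph{Step 3 (splitting off the simply connected part).} Use $1\to G_1^{\mathrm{der}}\to G_1\to G_1^{\mathrm{tor}}\to 1$. Since $G_1^{\mathrm{der}}$ is semisimple simply connected, $\Br_1(G_1^{\mathrm{der}})=\Br(K)$, and the exact sequences of Sansuc give $\Be(G_1)\cong\Be(G_1^{\mathrm{tor}})$, $\Be_\omega(G_1)\cong\Be_\omega(G_1^{\mathrm{tor}})$; the vanishing $H^1(K_v,G_1^{\mathrm{der}})=0$ (see below) makes $G_1(K_v)\twoheadrightarrow G_1^{\mathrm{tor}}(K_v)$. Now chase the diagram: given $(x_v)\in G_1(K_\Omega)$ orthogonal to $\Be_\omega(G_1)$, the image $(\bar x_v)$ is orthogonal to $\Be_\omega(G_1^{\mathrm{tor}})$, so by \ref{main} for the torus $G_1^{\mathrm{tor}}$ there is $t\in G_1^{\mathrm{tor}}(K)$ approximating $(\bar x_v)$ at a prescribed finite set $S$. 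The fibre $F_t$ of $G_1\to G_1^{\mathrm{tor}}$ over $t$ is a torsor under $G_1^{\mathrm{der}}$; smoothness of $G_1\to G_1^{\mathrm{tor}}$ provides points of $F_t(K_v)$ near $x_v$ for $v\in S$, and $H^1(K_v,G_1^{\mathrm{der}})=0$ gives $F_t(K_v)\neq\emptyset$ for the remaining $v$, so $[F_t]$ lies in the kernel of $H^1(K,G_1^{\mathrm{der}})\to\prod_v H^1(K_v,G_1^{\mathrm{der}})$; by the Hasse principle for $G_1^{\mathrm{der}}$ this class vanishes, $F_t\cong G_1^{\mathrm{der}}$ acquires a $K$-point, and weak approximation for $G_1^{\mathrm{der}}$ yields $g\in F_t(K)\subseteq G_1(K)$ approximating $(x_v)$ on $S$. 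The surjectivity onto $\Be(G_1)^D$ transports from the torus case via the same isomorphisms. This proves the theorem for $G_1$, hence for $G$.

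\emph{Main difficulty.} The crux is the input invoked in Step 3: semisimple simply connected groups over $K$ should satisfy weak approximation and the Hasse principle for $H^1$, and have trivial $H^1$ over each completion $K_v$. Each $K_v$ is a complete discretely valued field whose residue field has cohomological dimension $\le 1$, so the local vanishing reduces to Serre's Conjecture~II for such fields; the global statements over $K$ I expect to quote from the literature on linear algebraic groups over two-dimensional geometric global fields (as underlying \cite{izquierdo2021local}). A secondary, essentially bookkeeping, point is to keep track of compatibility with the local invariant pairings through Steps 1--2, so that one genuinely transports the four-term exact sequence and not merely a weak-approximation statement; this is taken care of by working with Sansuc's exact sequences throughout.
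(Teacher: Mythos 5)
Your Steps 1 and 3 are sound, and the inputs you flag as the ``main difficulty'' (weak approximation and $H^1(K,G^{\mathrm{sc}})=1$ for semisimple simply connected groups over $K$, plus $H^1(K_v,G^{\mathrm{sc}})=1$ via Bruhat--Tits over the completions) are indeed available and are exactly what the paper quotes. The genuine gap is in Step 2: you assert that a quasi-trivial torus $T_1$ has $\Be(T_1)=\Be_\omega(T_1)=0$, and the whole descent of the four-term sequence from the $z$-extension $G_1$ to $G$ rests on this (it is what makes $\Be_\omega(G)\to\Be_\omega(G_1)$ surjective in Sansuc's sequence, hence lets you transfer orthogonality of a lifted adelic point). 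That vanishing is a number-field reflex: there it follows from Chebotarev, since $\Be_\omega(T_1)\cong\Sha^2_\omega(K,\widehat T_1)\cong\Sha^1_\omega(L,\QQ/\ZZ)$ for a splitting field $L$. Over the fields of type (a) or (b) it is false in general. For instance, let $K$ be the function field of a Tate elliptic curve $E$ over $\CC((t))$ and let $E'\to E$ be the degree-$n$ isogeny obtained from the Tate parametrization by replacing $q$ by $q^n$; at every closed point $P$ of $E$ the residue field $\kappa(P)$ is a complete discretely valued field with the same Tate description, $E'(\kappa(P))\to E(\kappa(P))$ is surjective, so the cyclic extension $K(E')/K$ is completely split at every $v\in\Omega$ and $\Sha^1(K,\ZZ/n)\neq 0$; already $\Be(\Gm)\cong\Sha^2(K,\ZZ)\neq 0$. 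Consistently with this, both the present paper and Colliot-Th\'el\`ene--Harari only ever use the isomorphism $\Be_\omega(P)\simeq\Be(P)$ for quasi-trivial $P$ (Prop.\ 2.6 of \cite{Colliot_Th_l_ne_2015}), never the vanishing. Concretely, with your argument: given $(x_v)\in G(K_\Omega)$ orthogonal to $\Be_\omega(G)$ and a lift $(y_v)\in G_1(K_\Omega)$, you cannot conclude that $(y_v)$ is orthogonal to $\Be_\omega(G_1)$, because classes of $\Be_\omega(G_1)$ with nontrivial restriction to $\Be_\omega(T_1)$ need not come from $\Be_\omega(G)$; and translating the lift by $T_1(K_\Omega)$ does not change the pairing (the map $T_1(K_\Omega)\to\Be_\omega(T_1)^D$ is zero), so the obstruction cannot be removed by choosing a better lift.

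The paper's dévissage is organized precisely to avoid this point. For exactness at $G(K_\Omega)$ it replaces $G$ (reductive, after the unipotent reduction and passage to a power $G^n$) by a special covering $1\to F\to H\times_K P\to G\to 1$ with $F$ finite central, and runs the three-row diagram of Lemma 9.6 of \cite{Colliot_Th_l_ne_2015}, where the possibly nonzero group $\Be_S(H\times P)\simeq\Sha^2_S(\widehat P)$ appears as a term of the diagram and the needed exactness comes from duality for the finite module $F$ (via $\Sha^1_S(\widehat F)^D$), not from any vanishing. For exactness at $\Be_\omega(G)^D$ it uses a coflasque resolution $1\to P\to G'\to G\to 1$ and only the isomorphism $\Be_\omega(P)\simeq\Be(P)$. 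If you want to keep a $z$-extension-style argument, you would have to replace your vanishing claim by an argument of this type (duality for the finite or quasi-trivial kernel, or the $\Be_\omega\simeq\Be$ isomorphism inserted into a larger diagram chase); as written, Step 2 does not go through.
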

In particular, the obstruction to weak approximation is controlled by the Brauer set $G(K_\Omega)^{\Be_\omega}:=\Ker(G(K_\Omega)\rightarrow\Be_\omega(G)^D)$. However, such an exact sequence doesn't generalize to homogeneous spaces, and we do find counter-examples:
\begin{theorem}(Proposition \ref{thm2})
Let $Q$ be a flasque torus such that
$$H^1(K,Q)\rightarrow\oplus_{v\in\Omega}H^1(K_v,Q)\rightarrow\Sha^1_{\omega}(\widehat Q)^D$$ is not exact (for example the one constructed in Corollary 9.16 of \cite{Colliot_Th_l_ne_2015}). 
Embed $Q$ into some $\SL_n$ and let $Z:=\SL_n/Q$. 
Then $\overline {Z(K)}\subsetneq Z(K_\Omega)^{\Be_\omega}.$
\end{theorem}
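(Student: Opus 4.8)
The plan is to exploit the torsor $\pi\colon\SL_n\to Z$ under $Q$ to translate every assertion about $Z$ into Galois cohomology of the torus $Q$. Since $\SL_n$ is a special group, $H^1(F,\SL_n)=0$ for every field $F$, so the boundary maps $\partial\colon Z(K)\to H^1(K,Q)$ and $\partial_v\colon Z(K_v)\to H^1(K_v,Q)$ given by $z\mapsto[\pi^{-1}(z)]$ are surjective; as each $H^1(K_v,Q)$ is finite and the fibres of $\partial_v$ are $\SL_n(K_v)$-orbits, hence open, the $\partial_v$ are locally constant. As $Q$ is commutative, each fibre $\partial^{-1}(a)$ (for $a\in H^1(K,Q)$) is a single $\SL_n(K)$-orbit whose points have stabiliser a $K$-subgroup of $\SL_n$ isomorphic to $Q$, and likewise $\partial_v^{-1}(a_v)$ is a single $\SL_n(K_v)$-orbit. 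Since $\SL_n$ is $K$-rational it satisfies weak approximation, so $\SL_n(K)$ is dense in $\prod_{v\in S}\SL_n(K_v)$, whence $\partial^{-1}(a)$ is dense in $\prod_{v\in S}\partial_v^{-1}(a_v)$ for every finite $S\subseteq\Omega$. Writing $\partial_\Omega=\prod_v\partial_v$, these facts combine to give $\overline{Z(K)}=\partial_\Omega^{-1}(\overline I)$ with $I:=\image\bigl(H^1(K,Q)\to\prod_v H^1(K_v,Q)\bigr)$; and since $Q$ is flasque, $H^1(K_v,Q)=0$ for all but finitely many $v$ (purity of $H^1$ for flasque tori, the residue fields being of cohomological dimension $\le 1$), so $\prod_v H^1(K_v,Q)=\bigoplus_v H^1(K_v,Q)$ is a finite group and $\overline I=I$.

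For the Brauer set, note that since $\SL_n$ is semisimple simply connected we have $\bar K[Z]^{\times}=\bar K^{\times}$ and $\Pic\bar Z\cong\widehat Q$, and $Z$ has a $K$-point (the image of $1$); the Hochschild--Serre spectral sequence then yields $\Br_1(Z)/\Br(K)\cong H^1(K,\widehat Q)$ (the potential obstruction in $H^3(K,\Gm)$ is torsion-free while $H^1(K,\widehat Q)$ is torsion), whence $\Be_\omega(Z)\cong\Sha^1_\omega(\widehat Q)$. The key input is the compatibility of the Brauer--Manin pairing with $\partial$: for $\alpha\in\Br_1(Z)$ corresponding to $\beta\in H^1(K,\widehat Q)$ and $z_v\in Z(K_v)$ one has $\alpha(z_v)=\langle\partial_v(z_v),\beta_v\rangle_v$ in $\Br(K_v)$, where $\langle\cdot,\cdot\rangle_v\colon H^1(K_v,Q)\times H^1(K_v,\widehat Q)\to\Br(K_v)$ is the local duality pairing for the two-dimensional local field $K_v$ and $\beta_v$ is the localisation of $\beta$. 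Granting this, the Brauer--Manin pairing of a family $(z_v)_v$ with $\Be_\omega(Z)$ factors as $\partial_\Omega$ followed by the sum of the local pairings, so
\[
Z(K_\Omega)^{\Be_\omega}=\partial_\Omega^{-1}(J),\qquad J:=\Ker\Bigl({\textstyle\bigoplus_v}\,H^1(K_v,Q)\longrightarrow\Sha^1_\omega(\widehat Q)^{D}\Bigr).
\]

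The conclusion is then formal: $\partial_\Omega$ is surjective, so $\overline{Z(K)}=Z(K_\Omega)^{\Be_\omega}$ if and only if $I=J$, and global reciprocity (the sum of the local invariants of a global class is zero) gives $I\subseteq J$, which reproves the inclusion $\overline{Z(K)}\subseteq Z(K_\Omega)^{\Be_\omega}$. But $I=J$ is exactly the exactness at $\bigoplus_v H^1(K_v,Q)$ of the sequence $H^1(K,Q)\to\bigoplus_v H^1(K_v,Q)\to\Sha^1_\omega(\widehat Q)^{D}$, which fails for our $Q$; hence $I\subsetneq J$ and therefore $\overline{Z(K)}\subsetneq Z(K_\Omega)^{\Be_\omega}$.

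The genuinely non-formal step, and the one I expect to be the main obstacle, is in the second paragraph: identifying $\Br_1(Z)/\Br(K)$ with $H^1(K,\widehat Q)$ and, above all, establishing the compatibility of the Brauer--Manin pairing with the boundary map $\partial_v$ and with the local Tate-type duality of Izquierdo and Colliot-Thélène for the fields $K_v$. The first paragraph is essentially formal, using only that $\SL_n$ is special and $K$-rational together with an elementary fibration analysis; the point there that needs care is that weak approximation concerns the full product $\prod_v Z(K_v)$ while the hypothesis is phrased over $\bigoplus_v H^1(K_v,Q)$, which is precisely why the vanishing of $H^1(K_v,Q)$ for almost all $v$ (valid because $Q$ is flasque) is used.
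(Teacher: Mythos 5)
Your proposal is correct and follows essentially the same route as the paper: both arguments push everything through the boundary maps of the $Q$-torsor $\SL_n\to Z$ (surjective since $H^1(\cdot,\SL_n)=1$), identify $\Be_\omega(Z)$ with $\Sha^1_\omega(\widehat Q)$ using $\Br_{\text{a}}(\SL_n)=0$, use the finiteness of $\oplus_{v}H^1(K_v,Q)$ for the flasque torus $Q$ to get the topological statement about $\overline{Z(K)}$, and then read off the strict inclusion from the assumed non-exactness. The compatibility you single out as the main obstacle (evaluation of an element of $\Br_1(Z)$ at a local point equals the local duality pairing of its class in $H^1(K,\widehat Q)$ with the boundary of that point) is exactly the commutativity of the bottom square in the paper's diagram, a standard fact in the spirit of Lemme 8.11 of \cite{sansuc1981groupe}, which the paper likewise uses without further proof.
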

In order to better understand the obstruction to weak approximation for homogeneous spaces over fields of the type (a) or (b), we should somehow combine the Brauer-Manin obstruction with the descent obstruction, another natural tool used in the study of such questions, as done by Izquierdo and Lucchini Arteche in \cite{izquierdo2021local} for the study of obstruction to rational points. 
\begin{theorem}(Proposition \ref{thm3})
For $G$ a connected linear group, we consider a homogeneous space $Z$ under $G$ with geometric stabilizer $\bar H$ such that $G^{\text{ss}}$ is simply connected and $\bar H^{\text{torf}}$ is abelian. Define 
$$Z(K_\Omega)^{\text{qt},\Be_\omega}:=\bigcap_{f:W\xrightarrow T{Z},T\text{ quasi-trivial}}f(W(K_\Omega)^{\Be_\omega}).$$
where $f$ runs over torsors $W\rightarrow Z$ under quasi-trivial tori $T$. Then $Z(K_\Omega)^{\text{qt},\Be_\omega}=\overline{Z(K)}.$
\end{theorem}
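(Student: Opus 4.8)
The inclusion $\overline{Z(K)}\subseteq Z(K_\Omega)^{\text{qt},\Be_\omega}$ is formal. Fix a torsor $f\colon W\to Z$ under a quasi-trivial torus $T$. As $T$ is quasi-trivial, $H^1(K,T)=0$ and $H^1(K_v,T)=0$ for all $v\in\Omega$, so $W(K)\to Z(K)$ and each $W(K_v)\to Z(K_v)$ are onto; moreover $T$ satisfies weak approximation over $K$ (the torus case of \eqref{main}, or simply $\Be(T)=\Be_\omega(T)=0$), and $f$ is Zariski-locally trivial since quasi-trivial tori are special. Given $(z_v)\in\overline{Z(K)}$, choose a lift $(w_v)\in W(K_\Omega)$, a finite $S\subset\Omega$ and a neighbourhood $U$ of $(w_v)_{v\in S}$. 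Pick $z'\in Z(K)$ that is $v$-adically close to $z_v$ for $v\in S$ and lift it to $w'\in W(K)$. For $v\in S$, both $w'$ and the value $\tilde w_v\approx w_v$ of a local section of $f$ lie over $z'_v$, so $w'=t_v\tilde w_v$ for a unique $t_v\in T(K_v)$; choosing $s\in T(K)$ with $s\approx t_v^{-1}$ for $v\in S$ by weak approximation for $T$, the global point $sw'\in W(K)$ satisfies $sw'\approx\tilde w_v\approx w_v$ for $v\in S$, i.e.\ $sw'\in U$. Hence $(w_v)\in\overline{W(K)}\subseteq W(K_\Omega)^{\Be_\omega}$, so $(z_v)=f((w_v))\in f(W(K_\Omega)^{\Be_\omega})$; as $f$ was arbitrary, $\overline{Z(K)}\subseteq Z(K_\Omega)^{\text{qt},\Be_\omega}$.

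For the converse it is enough to exhibit one torsor $f_0\colon W_0\to Z$ under a quasi-trivial torus with $\overline{W_0(K)}=W_0(K_\Omega)^{\Be_\omega}$: then $Z(K_\Omega)^{\text{qt},\Be_\omega}\subseteq f_0(W_0(K_\Omega)^{\Be_\omega})=f_0(\overline{W_0(K)})\subseteq\overline{f_0(W_0(K))}\subseteq\overline{Z(K)}$, using continuity of $f_0$ and $f_0(W_0(K))\subseteq Z(K)$. I will take $W_0$ to be a \emph{quasi-trivial resolution} of $Z$, built following the descent/resolution formalism used for rational points in \cite{izquierdo2021local}. First, a $z$-extension $G'\twoheadrightarrow G$ (keeping the semisimple part simply connected and making the toric part quasi-trivial) together with a ``resolution of the stabilizer'' reduces to the case in which the geometric stabilizer equals $\bar H^{\text{torf}}$ itself, i.e.\ is abelian of multiplicative type: the intermediate maps are fibrations whose fibres are homogeneous spaces with stabilizer $\bar H^{\text{ssu}}=\ker(\bar H\to\bar H^{\text{torf}})$, and these satisfy weak approximation with no Brauer obstruction by the same d\'evissage as in Theorem \ref{4.1}. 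Combining this with the exactness of sequences of type \eqref{main} at each stage, the closure of the rational points and the $\Be_\omega$-set propagate compatibly, so weak approximation for $Z$ follows from weak approximation for the reduced homogeneous space.

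Now assume $G$ quasi-trivial and the band of $Z$ abelian of multiplicative type; since $\bar H^{\text{torf}}$ is abelian this band is represented by a genuine $K$-group of multiplicative type, and we may choose an embedding of it into a quasi-trivial torus $T_0$ with quotient a torus $T_1$. The associated resolution $W_0=G\times^{H}T_0$ is then a torsor $f_0\colon W_0\to Z$ under $T_0$ which also fibres over $T_1$, the fibres being pure inner forms of $G$ — in particular connected linear groups with simply connected semisimple part. For such a $W_0$ the equality $\overline{W_0(K)}=W_0(K_\Omega)^{\Be_\omega}$ is obtained by the fibration method: weak approximation for the torus $T_1$ (the case \eqref{main}) and for the fibres (Theorem \ref{4.1}) are known, and the local--global principle of \cite{izquierdo2021local} ensures $W_0(K)\neq\emptyset$ as soon as $W_0(K_\Omega)^{\Be_\omega}\neq\emptyset$ (for a resolution the descent obstruction is trivial and $\Be\subseteq\Be_\omega$). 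Feeding this back, any $(z_v)\in Z(K_\Omega)^{\text{qt},\Be_\omega}$ lifts to $(w_v)\in W_0(K_\Omega)^{\Be_\omega}=\overline{W_0(K)}$, whence $(z_v)=f_0((w_v))\in\overline{Z(K)}$; together with the first inclusion this proves the asserted equality.

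The main obstacle is the middle part: producing and controlling the resolution $W_0$. One must (i) realise it as a torsor under a torus that is truly \emph{quasi-trivial} (not merely flasque) while retaining the fibration $W_0\to T_1$ with group fibres, which forces one to work with the band/lien of $Z$ — here the hypothesis ``$\bar H^{\text{torf}}$ abelian'' is exactly what makes the band representable by a $K$-group of multiplicative type — and (ii) track the $\Be_\omega$-condition, i.e.\ vanishing at \emph{almost all} places rather than all, coherently through the successive fibrations $Z\to$ (its ``torf''-quotient), $W_0\to Z$ and $W_0\to T_1$: one has to match the $\Be$- versus $\Be_\omega$-conditions upstairs and downstairs at each step, using that quasi-trivial tori and the ``ssu'' fibres contribute nothing to these obstruction groups and that \eqref{main} (and its $H^1$-analogue) is exact at every intermediate homogeneous space. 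The remaining steps are routine, given the torus and linear-group cases from \cite{Colliot_Th_l_ne_2015}, \cite{izquierdo2019dualite}, \cite{izquierdo2021local} and Theorem \ref{4.1}.
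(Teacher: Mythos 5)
Your overall strategy is the same as the paper's: establish the formal inclusion $\overline{Z(K)}\subseteq Z(K_\Omega)^{\text{qt},\Be_\omega}$, then exhibit one resolution torsor $f_0\colon W_0\to Z$ under a quasi-trivial torus with $\overline{W_0(K)}=W_0(K_\Omega)^{\Be_\omega}$, proved by fibering $W_0$ over a torus, and push down. Your treatment of the first inclusion (Zariski-local triviality of torsors under special tori plus weak approximation for $T$) is a legitimate alternative to the paper's argument, which instead shows that each $f(W(K_\Omega)^{\Be_\omega})$ is closed because the Brauer--Manin pairing against $\alpha\in\Be_\omega(W)$ is constant along $T(K_v)$-orbits (via $m^*\alpha=p_T^*i_w^*\alpha+p_W^*\alpha$ and the vanishing of $T(K_\Omega)\to\Be_\omega(T)^D$ for $T$ quasi-trivial); modulo translating your local sections so that they pass through the chosen lifts $w_v$, that part is fine.

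The genuine gap is in the converse, exactly where you write that the fibres satisfy weak approximation ``by the same d\'evissage as in Theorem \ref{4.1}''. The fibres in question --- whether the fibres of $W\to W/G^{\text{ssu}}$ over rational points of the base, which are homogeneous spaces of $G^{\text{ssu}}$ with geometric stabilizer $\bar H^{\text{ssu}}$, or the fibres of your $W_0\to T_1$, which are homogeneous spaces of $G$ with twisted stabilizers of multiplicative type (not ``pure inner forms of $G$'') --- are homogeneous spaces, not connected linear groups, so Theorem \ref{4.1} does not apply to them. Establishing weak approximation for these fibres is precisely the content of Proposition \ref{wa} in the paper, and it is not formal: one needs (i) non-emptiness of the fibre's rational points (Prop.\ 3.1 of \cite{izquierdo2021local}); (ii) the bijection $H^1(\cdot,H^{\text{ad}})\simeq H^2(\cdot,\mu)$ of \cite{colliot2004arithmetic} to identify $H^1(\cdot,H^{\text{ss}})$ with $H^2(\cdot,F)$; and (iii) the duality theorem over these two-dimensional fields (Thm.\ 2.7 of \cite{izquierdo2016theoremes}) giving the surjectivity of $H^2(K,F)\to\oplus_{v\in S}H^2(K_v,F)$, hence of $H^1(K,H^{\text{ss}})\to\oplus_{v\in S}H^1(K_v,H^{\text{ss}})$, which is the input that makes the diagram chase comparing $W_P(K)$ with $\prod_{v\in S}W_P(K_v)$ go through. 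Your proposal never supplies this surjectivity or anything equivalent, and no d\'evissage of the kind used for groups produces it. Likewise, the reduction by $z$-extensions and the claim that ``the closure of the rational points and the $\Be_\omega$-set propagate compatibly'' through the successive fibrations is asserted rather than proved; the paper sidesteps this by constructing one explicit torsor $W=(G\times T)/H$ whose quotient $W/G^{\text{ssu}}$ is a torus and proving Proposition \ref{wa} for its fibres.
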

\section{Acknowledgments}
The author would like to thank Cyril Demarche for his enormous help as a PhD supervisor and a great number of useful discussions, without which it wouldn't be possible to see this article coming out.
\section{Notation and preliminaries}\label{notation}
The notation will be fixed in this section and used throughout this article. The setting is pretty much the same as in \cite{izquierdo2021local} where they treated the problem of Hasse principle for such varieties.\par
\paragraph{Cohomology} The cohomology groups we consider are always in terms of Galois cohomology or étale cohomology.
\paragraph{Fields}
Throughout this article, we consider a field $K$ of one of the following two types:
\begin{enumerate}[(a)]
    \item the function field of a smooth projective curve $C$ over $\CC((t))$;
    \item the field of fractions of a two-dimensional, excellent, henselian, local domain $A$ with algebraically closed residue field of characteristic zero. An example of such a field is any finite extension of the field of fractions $\CC((X, Y))$ of the formal power series ring $\CC[[X,Y]]$.
\end{enumerate}
Both these two types of fields share a number of properties that hold also for totally imaginary number fields:
\begin{enumerate}[(i)]
    \item the cohomological dimension is two,
    \item index and exponent of central simple algebras coincide,
    \item for any semisimple simply connected group $G$ over $K$, we have $H^1(K,G)=1$.
    \item there is a natural set $\Omega$ of rank one discrete valuations (i.e. with values in $\ZZ$), with respect to which one can take completions. For type (a), let $\Omega$ be the set of valuations coming from $C^{(1)}$ the set of closed points of the curve $C$. For type (b), let $\Omega$ be the set of valuations coming from prime ideals of height one in $A$.
    
\end{enumerate}
Weak approximation is satisfied for semisimple simply connected groups over such fields (c.f. Théorème 4.7 of \cite{colliot2004arithmetic} for type (b), and \S 10.1 of \cite{Colliot_Th_l_ne_2015} for type (a)).
\paragraph{Sheaves and  abelian groups}
For $i>0$, we denote by $\mu_n^{\otimes i}$ the $i$-time tensor product of the étale sheaf $\mu_n$ of $n$-th roots of unity with itself. We set $\mu_n^0=\ZZ/n\ZZ$ and for $i<0$, and we define $\mu_n^{\otimes i}=\Hom(\mu_n^{\otimes(-i)},\ZZ/n\ZZ).$ Over the fields $K$ we consider, since $K$ contains an algebraically closed field and thus all the roots of unity, we have a (non-canonical) isomorphism $\mu_n\simeq\ZZ/n\ZZ$. Denote by $\QQ/\ZZ(i)$ the direct limit of the sheaves $\mu_m^{\otimes i}$ for all $m>0$. By choosing a compatible system of primitive $n$-th roots of unity for every $n$ (for example $\xi_n=\exp(2\pi i/n)$), we can identify $\QQ/\ZZ(i)$ with $\QQ/\ZZ$.
\par
For an abelian group $A$ (that we always suppose to be equipped with the discrete topology if there isn't any other topology defined on it), we denote by $A^D$ the group of continuous homomorphisms from $A$ to $\QQ/\ZZ(-1)$. The functor $A\mapsto A^D$ is an anti-equivalence of categories between torsion abelian groups and profinite groups.

\paragraph{Weak approximation and Brauer-Manin obstruction}
Given a set $\Omega$ of places of $K$, let $Z(K_\Omega):=Z(K_\Omega)$ be the topological product where each $Z(K_v)$ is equipped with the $v$-adic topology, and $K_v$ denotes the completion at $v$. We say that a $K$-variety $Z$ satisfies weak approximation with respect to $\Omega$ if the set of rational points $Z(K)$, considered through the diagonal map as a subset of $Z(K_\Omega)$, is a dense subset.
Weak approximation is a birational invariant, which is a consequence of the implicit function theorem for $K_v$ (c.f. Theorem 9.5.1 of \cite{colliot2021brauer} and the argument in the Proposition 12.2.3 of \cite{colliot2021brauer}).
\par
There are varieties for which weak approximation fails, thus we try to introduce obstructions that give a more precise description of the closure of $Z(K)$ inside $Z(K_\Omega))$, thus explaining such failures. 
\par 
For a variety $Z$, the cohomological Brauer group $\Br(Z)$ is defined to be $H^2(Z,\Gm).$
Define also
\begin{itemize}
    \item $\Br_0(Z):=\im(\Br(K)\rightarrow\Br(Z))$,
    \item $\Br_1(Z):=\ker(\Br(Z)\rightarrow\ Br(Z_{\bar K}))$,
     \item $\Br_{\text{a}}(Z):=\Br_1(Z)/\Br_0(Z),$
    \item $\Be(Z):=\ker(\Br_{\text{a}}(Z)\rightarrow\prod_{v\in\Omega}\Br_{\text{a}}(Z_{K_{v}}))$,
       \item $\Be_S(Z):=\ker(\Br_{\text{a}}(Z)\rightarrow\prod_{v\in S}\Br_{\text{a}}(Z_{K_{v}}))$ where $S$ is a finite subset of $\Omega$,
   
    \item $\Be_\omega(Z)$: subgroup of $\Br_{\text{a}}(Z)$ containing the elements vanishing in $\Br_{\text{a}}(Z_{K_v})$ for almost all places $v\in\Omega$. 
\end{itemize}
Similar to the exact sequence from Class Field Theory for number fields, there is also an exact sequence (c.f. Prop. 2.1.(v) of \cite{Colliot_Th_l_ne_2015} and Thm. 1.6 of \cite{izquierdo2019dualite})
$$\Br(K)\rightarrow\bigoplus_{v\in \Omega}\Br(K_v)
\rightarrow \QQ/\ZZ\rightarrow 0$$
and a Brauer-Manin pairing
\begin{align*}
    Z(K_\Omega)\times\Be_\omega&\rightarrow \QQ/\ZZ\\
    ((P_v),\alpha)& \mapsto \sum_{v\in\Omega}<P_v,\alpha>
    \end{align*}
with the following property: $Z(K)$ is contained in the subset $Z(K_\Omega)^{\Be_\omega}$ of $Z(K_\Omega)$ defined as $\{(P_v)\in Z(K_\Omega):((P_v),\alpha)=0\text{ for all }\alpha\in\Be_{\omega}(Z)\}$: the set of points that are orthogonal to $\Be_{\omega}(Z)$. When $Z(K)$ is non-empty and $\overline{Z(K)}=Z(K_\Omega)^{\Be_\omega}$, we say that the Brauer-Manin obstruction with respect to $\Be_\omega$ is the only one to weak approximation.
\paragraph{Tate-Shafarevich groups} For a Galois module $M$ over the field $K$ and an integer $i\geq 0$, we define the following Tate-Shafarevich groups:
\begin{itemize}
    \item $\Sha^i(K,M):=\ker(H^i(K,M)\rightarrow\prod_{v\in\Omega}H^i(K_v,M))$,
        \item $\Sha_S^i(K,M):=\ker(H^i(K,M)\rightarrow\prod_{v\in S}H^i(K_v,M))$ where $S$ is a finite subset of $\Omega$,
    \item $\Sha_{\omega}^1(K,M):$ subgroup of $H^i(K,M)$ containing the elements vanishing in $H^i(K_v,M)$ for almost all places $v\in\Omega$.
\end{itemize}

\paragraph{Algebraic groups and homogeneous spaces}
For a linear algebraic $K$-group $G$, the following notation will be used:
\begin{itemize}
\item $D(G)$: the derived subgroup of $G$,
\item $G^{\circ}$: the neutral connected component of $G$,
\item $G^{\text{f}}:=G/G^\circ$ the group of connected components of $G$, which is a finite group,
\item $G^{\text u}:$ the unipotent radical of $G^\circ$,
\item $G^{\text{red}}:=G^\circ/G^\text u$ which is a reductive group,
\item $G^{\text{ss}}:=D(G^{\text{red}})$ which is a semisimple group,
\item $G^{\text{tor}}:=G^{\text{red}}/G^{\text{ss}}$ which is a torus,
\item $G^{\text{ssu}}:=\ker(G^{\circ}\rightarrow G^{\text{tor}})$ which is an extension of $G^{\text{ss}}$ by $G^\text u$,
\item $G^{\text{torf}}:=G/G^{\text{ssu}}$ which is an extension of $G^\text{f}$ by $G^{\text{tor}}$,
\item $\hat G$: the Galois module of the geometric characters of $G$.
\end{itemize}
\par
A unipotent group over $K$ a field of characteristic $0$ is isomorphic to an affine space $\A_K^n$, thus $K$-rational and satisfies weak approximation.\par
A torus $T$ is said to be quasi-trivial if $\hat T$ is an induced $\Gal(\bar K/K)$-module.

\section{Weak approximation for connected linear groups}
The aim of this section is to prove the following theorem which concerns the Brauer-Manin obstruction to weak approximation for connected linear groups:
\begin{theorem}\label{4.1}
Let $K$ be a field of the type (a) or (b) and let $G$ be a connected linear group over $K$. Then there is an exact sequence
$$1\rightarrow\overline{G(K)}\rightarrow G(K_\Omega)\rightarrow\Be_\omega(G)^D\rightarrow\Be(G)^D\rightarrow 1.$$
\end{theorem}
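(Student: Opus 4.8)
The plan is to reduce the statement to the known case of tori by a dévissage along the standard filtration of a connected linear group $G$, following the strategy of \cite{Colliot_Th_l_ne_2015}. First I would dispose of the unipotent part: since $\operatorname{char} K = 0$, the unipotent radical $G^{\mathrm u}$ is $K$-isomorphic to an affine space, hence $K$-rational and satisfies weak approximation, and moreover $G \to G^{\mathrm{red}} = G/G^{\mathrm u}$ has a section locally in the Zariski topology so that $G(K_v) \to G^{\mathrm{red}}(K_v)$ is surjective with $G^{\mathrm u}$-fibers; this identifies $\overline{G(K)}$ inside $G(K_\Omega)$ with the preimage of $\overline{G^{\mathrm{red}}(K)}$ inside $G^{\mathrm{red}}(K_\Omega)$, and identifies $\operatorname{\Be}_\omega(G) \simeq \operatorname{\Be}_\omega(G^{\mathrm{red}})$ and $\operatorname{\Be}(G) \simeq \operatorname{\Be}(G^{\mathrm{red}})$ since Brauer groups are birational invariants for smooth varieties (and $G$ is birational to $G^{\mathrm{red}} \times \mathbb A^n$). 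So one may assume $G$ reductive.

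Next I would treat the reductive case via the exact sequence $1 \to G^{\mathrm{ss}} \to G \to G^{\mathrm{tor}} \to 1$, and more usefully via a $z$-extension $1 \to F \to G_1 \to G \to 1$ where $G_1$ has $G_1^{\mathrm{ss}}$ simply connected and $F$ is a quasi-trivial torus, together with $1 \to G_1^{\mathrm{ssu}} \to G_1 \to G_1^{\mathrm{tor}} \to 1$ with $G_1^{\mathrm{ss}} = G_1^{\mathrm{ssu}}$ semisimple simply connected. The key inputs are: (1) for semisimple simply connected $H$ over $K$ of type (a) or (b), $H^1(K,H) = 1$ and $H$ satisfies weak approximation (stated in the excerpt), so $H$ contributes nothing to the obstruction; (2) for a quasi-trivial torus $P$, $\hat P$ is induced so $H^1(K,\hat P$-related$)$ Brauer contributions vanish, $\operatorname{\Be}_\omega(P) = \operatorname{\Be}(P) = 0$, and $P$ satisfies weak approximation (it is $K$-rational). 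Feeding these through the long exact cohomology sequences in fppf/étale cohomology, and using that $\Gamma(K_v,-)$ gives the correct local points, one gets that the obstruction for $G$ is the same as for $G^{\mathrm{tor}}$ — for which the asserted four-term exact sequence is precisely \eqref{main}, proved by Colliot-Thélène--Harari in case (a) and by Izquierdo in case (b). One must also check the Brauer-group identifications $\operatorname{\Be}_\omega(G) \simeq \operatorname{\Be}_\omega(G^{\mathrm{tor}}_1)$ etc.\ are compatible with the duality pairing, i.e.\ that the maps in the four-term sequence match up; this uses functoriality of the Brauer--Manin pairing and that $\operatorname{Br}_{\mathrm a}$ of $G$ is computed from $\hat G = \widehat{G^{\mathrm{tor}}}$ together with a $\operatorname{Pic}$ contribution that vanishes for simply connected groups (Sansuc's formulas), so $\operatorname{Br}_{\mathrm a}(G) \simeq \operatorname{Br}_{\mathrm a}(G^{\mathrm{tor}})$ and similarly at each $K_v$.

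The main obstacle I expect is the bookkeeping at the level of exactness of the four-term sequence rather than any single deep input: one needs that passing from $G$ to $G^{\mathrm{tor}}$ (through the $z$-extension and through killing $G^{\mathrm u}$ and $G^{\mathrm{ss}}$) induces an \emph{isomorphism} of the whole diagram
\[
\begin{array}{ccccccccc}
1 & \to & \overline{G(K)} & \to & G(K_\Omega) & \to & \operatorname{\Be}_\omega(G)^D & \to & \operatorname{\Be}(G)^D \to 1
\end{array}
\]
and not merely of its outer terms, so that exactness transports. Concretely, the delicate point is surjectivity of $G(K_\Omega) \to \operatorname{\Be}_\omega(G)^D$ onto the kernel of the last map and left-exactness at $\operatorname{\Be}_\omega(G)^D$: one must show a local point orthogonal to $\operatorname{\Be}_\omega(G)$ that maps into $\overline{G^{\mathrm{tor}}(K)}$ actually lifts to $\overline{G(K)}$, which requires surjectivity of $G(K_v) \to G^{\mathrm{tor}}(K_v)$ (true since the fibers are torsors under the $K_v$-rational group $G^{\mathrm{ssu}}$ with $H^1(K_v, G^{\mathrm{ssu}}) $ controlled, using $H^1(K_v,G^{\mathrm{ss}}_1)=1$ for the simply connected form) together with a diagram chase. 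I would organize the argument so that all these surjectivities are collected first, then the four-term sequence for $G^{\mathrm{tor}}$ is invoked as a black box, then a single application of the five/nine lemma finishes it.
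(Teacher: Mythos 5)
Your overall dévissage (dispose of $G^{\mathrm u}$, then compare a reductive $G$ with an extension whose derived group is simply connected, then quote the torus case) is in the right spirit, but the specific route has a genuine gap, and it sits exactly where these two-dimensional fields differ from number fields. Your input (2), that $\Be_\omega(P)=\Be(P)=0$ for a quasi-trivial torus $P$, is false here: since $\Br_{\text{a}}(P)\simeq H^2(K,\hat P)$ and $\hat P$ is a permutation module, $\Be(P)$ is a product of groups $\Sha^1(L,\QQ/\ZZ)$ for finite extensions $L/K$, and over fields of type (a) or (b) these do not vanish in general (there is no Chebotarev-type argument; this nonvanishing is precisely why the sequence in Theorem \ref{4.1} ends with the a priori nontrivial term $\Be(G)^D$). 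What is true, and all the paper ever uses, is the equality $\Be_\omega(P)=\Be(P)$ (Prop.\ 2.6 of \cite{Colliot_Th_l_ne_2015}). This is not cosmetic: your descent from the $z$-extension $1\to F\to G_1\to G\to 1$ down to $G$, for the hard inclusion $G(K_\Omega)^{\Be_\omega}\subseteq\overline{G(K)}$, needs that some lift to $G_1(K_\Omega)$ of an adelic point orthogonal to $\Be_\omega(G)$ is orthogonal to $\Be_\omega(G_1)$. The discrepancy lives in $\Be_\omega(G_1)/\pi^*\Be_\omega(G)$, which injects into $\Be_\omega(F)=\Be(F)$; over a number field this vanishes and the step is automatic, but here it need not, and you cannot repair the lift by translating by $(f_v)\in F(K_\Omega)$, because $F(K_\Omega)$ pairs to zero with all of $\Be_\omega(F)$ (quasi-trivial tori satisfy weak approximation and $\Be_\omega(F)=\Be(F)$). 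So the pairing of your lift against the extra classes is independent of the lift, and you have no argument that it vanishes; the ``five/nine lemma'' finish does not apply because the Brauer duals upstairs and downstairs are not identified.

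For comparison, the paper proves exactness at $\overline{G(K)}$ and $G(K_\Omega)$ via special coverings $1\to F\to H\times_K P\to G\to 1$ with \emph{finite} central kernel (after replacing $G$ by a suitable power $G^n$, which is harmless), where the deviation between local and global points is captured by $H^1(\cdot,F)$ and the duality theorem for the finite module $F$ identifies the relevant cokernel with $\Sha^1_S(\hat F)^D$, hence with the corresponding piece of $\Be_S(G)^D$; the coflasque/quasi-trivial resolution is used only for exactness at $\Be_\omega(G)^D$, where the isomorphism $\Be_\omega(P)\simeq\Be(P)$ (not its vanishing) suffices for the diagram chase. Your reductions of the unipotent part and your fibration argument $G_1\to G_1^{\text{tor}}$ (using $H^1(K_v,G_1^{\text{ss}})=1$, weak approximation for simply connected groups, and $\Br_{\text{a}}(G_1)\simeq\Br_{\text{a}}(G_1^{\text{tor}})$ when $G_1^{\text{ss}}$ is simply connected) are fine; to salvage the final step you would have to either prove $\Be_\omega(G_1)=\pi^*\Be_\omega(G)+\Be(G_1)$ for your $z$-extension, or switch to a covering with finite kernel as the paper does.
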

We complete the proof by a series of lemmas.
\begin{lemma} Suppose that Theorem 4.1 holds for $G^n$, then it also holds for $G$.
\end{lemma}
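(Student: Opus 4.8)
The plan is to reduce the statement for $G$ to the already-known statement for $G^n$ by exploiting the diagonal embedding and the multiplicativity of all four terms in the exact sequence. First I would observe that each of the functors appearing in the sequence behaves well with respect to products: $G^n(K_\Omega) = G(K_\Omega)^n$, $\overline{G^n(K)} = \overline{G(K)}^n$ (the closure of the diagonal in a product of $n$ copies is the diagonal of the closure, since $G(K)^n$ is dense in $\overline{G(K)}^n$), and $\Be_\omega(G^n) \cong \Be_\omega(G)^n$, $\Be(G^n)\cong\Be(G)^n$ because $\Br_{\text a}$ of a product of group varieties splits as a direct sum (using that $G$ has a rational point, namely the identity, so the Künneth-type decomposition $\Br_{\text a}(G_1\times G_2)\cong\Br_{\text a}(G_1)\oplus\Br_{\text a}(G_2)$ holds, and this is compatible with base change to each $K_v$). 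Dualizing, $\Be_\omega(G^n)^D\cong(\Be_\omega(G)^D)^n$ and similarly for $\Be(G)^D$.

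Next I would package the exact sequence of Theorem \ref{4.1} for $G$ as a statement about a single complex $C_\bullet(G)$ of topological/abelian groups and note that $C_\bullet(G^n)\cong C_\bullet(G)^{\oplus n}$ (or rather the $n$-fold product) via the identifications above, compatibly with the maps in the sequence; here I must check that the Brauer--Manin pairing $G(K_\Omega)\times\Be_\omega(G)\to\QQ/\ZZ$ is additive under the product decomposition, so that the map $G(K_\Omega)\to\Be_\omega(G)^D$ is the $n$-fold product of the corresponding map for $G$. Since a finite product of complexes is exact if and only if each factor is, and since $C_\bullet(G)$ is a direct factor of $C_\bullet(G^n)$ (embed $G$ diagonally, or project onto one coordinate), exactness of $C_\bullet(G^n)$ forces exactness of $C_\bullet(G)$. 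Concretely, the diagonal $\Delta\colon G\to G^n$ and a coordinate projection $\pi\colon G^n\to G$ satisfy $\pi\circ\Delta=\id$, and functoriality of the whole diagram in $G$ then exhibits the sequence for $G$ as a retract of the sequence for $G^n$, and a retract of an exact sequence of abelian groups is exact.

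The main point requiring care — and the step I expect to be the real content — is the compatibility of the functors $\Be_\omega$, $\Be$ and the Brauer--Manin pairing with products, i.e. establishing the decomposition $\Br_{\text a}(G^n)\cong\Br_{\text a}(G)^{\oplus n}$ naturally and in a way that commutes with restriction to each completion $K_v$. This rests on the fact that for a group variety (or any variety with a rational point) the reduced Brauer group is additive on products; one invokes the standard splitting $\Br(X\times Y)\cong \Br(X)\oplus\Br(Y)\oplus(\text{cross terms})$ together with the vanishing of the cross terms after passing to $\Br_{\text a}$ — this uses that $\hat{G^n}=\hat G^{\oplus n}$ and the Sansuc-type exact sequences relating $\Br_{\text a}(G)$ to $H^1$ and $H^2$ of the character module, which are themselves additive. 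Everything else (behaviour of closures, duals, and finite products of exact sequences) is formal. I would therefore structure the proof as: (1) product formulas for the four terms, with the $\Br_{\text a}$ additivity as the one genuine lemma; (2) compatibility of the maps, in particular additivity of the pairing; (3) the retract argument concluding exactness for $G$ from exactness for $G^n$.
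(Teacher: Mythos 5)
Your proposal is correct and follows essentially the same route as the paper, whose one-line proof simply observes that every operation in the sequence (closure of rational points, $K_\Omega$-points, $\Be_\omega$, $\Be$, dualization) takes products to products, so the sequence for $G^n$ is canonically the $n$-fold product of the sequence for $G$ and exactness passes to each factor; your elaboration via Sansuc-type additivity of $\Br_{\text{a}}$ and the retract/projection argument just fills in these details. One minor slip of wording: $G^n(K)$ is the full product $G(K)^n$ rather than a diagonal, but the identity you actually use, $\overline{G^n(K)}=\overline{G(K)}^n$, is exactly the needed fact.
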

\begin{proof}
It follows from the fact that all of the operations in the exact sequence above take products to products. (For example, the closure of a product is the product of closures.)
\end{proof}
An exact sequence $$1\rightarrow F\rightarrow H\times_K P\rightarrow G\rightarrow 1$$ where $H$ is semi-simple simply connected, $P$ is a quasitrivial $K$-torus and $F$ is finite and central is called a special covering of the reductive $K$-group $G$. For any reductive $K$-group $G$, there exists an integer $n>0$ such that $G^n$ admit a special covering. By the lemma above, we can suppose without loss of generality that $G$ admits a special covering itself.

\begin{lemma}
The following sequence is exact
$$1\rightarrow \overline{G(K)}\rightarrow\prod_{v\in \Omega}G(K_v)\rightarrow \Be_\omega(G)^D$$
where $G$ is a reductive connected linear group.
\end{lemma}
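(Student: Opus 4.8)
The strategy is to reduce the statement for a general reductive $G$ to the known case of tori (the sequence \eqref{main}), using the special covering $1\to F\to H\times_K P\to G\to 1$, and then to chase cohomology. First I would observe that since $H$ is semisimple simply connected, $H(K)$ is dense in $\prod_v H(K_v)$ (weak approximation, recalled in the preliminaries), so $H$ contributes nothing to the failure of weak approximation. Thus, writing $G' := H\times_K P$, I expect $\overline{G'(K)} = \prod_v H(K_v)\times\overline{P(K)}$ and, since $P$ is a quasitrivial torus, the torus case gives the exactness of $1\to\overline{P(K)}\to\prod_v P(K_v)\to\Be_\omega(P)^D$. Actually for quasitrivial $P$ one has $\Be_\omega(P)=0$ and $P$ even satisfies weak approximation (it is $K$-rational), so $G'$ satisfies weak approximation and $\Be_\omega(G')=0$.

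**The key diagram.** The heart of the argument is the exact sequence of pointed sets / groups coming from $1\to F\to G'\to G\to 1$:
\begin{equation*}
G'(K)\to G(K)\to H^1(K,F)\to H^1(K,G')
\end{equation*}
and its local analogues, together with the map $G(K_v)\to H^1(K_v,F)$. Since $F$ is finite and central, $H^1(-,F)$ is an honest abelian group and one has good duality: $H^1(K,F)$ and the $\Be_\omega$, $\Sha^1_\omega$ groups attached to $\widehat F$ fit into the arithmetic duality sequences over fields of type (a) or (b) proved in \cite{Colliot_Th_l_ne_2015}, \cite{izquierdo2019dualite}. I would compare the "weak approximation defect" $\prod_v G(K_v)/\overline{G(K)}$ with the corresponding defect for $H^1(K,F)\to\prod_v H^1(K_v,F)$, i.e. with $\Sha^1_\omega(\widehat F)^D$ up to the image coming from $G'$. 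Because $G'$ satisfies weak approximation and $\Be_\omega(G')=0$, the map $\prod_v G'(K_v)\to\prod_v H^1(K_v,F)$ has image equal to the closure of the image of $G'(K)$, so the coboundary identifies $\prod_v G(K_v)/\overline{G(K)}$ with a subquotient of $\prod_v H^1(K_v,F)/\overline{\mathrm{im}\,H^1(K,F)}$. The latter is dual to $\Be_\omega$ of $F$ by the duality results cited. Finally I would identify $\Be_\omega(\widehat F)$ with $\Be_\omega(G)$: since $G^{\mathrm{ss}}$ is simply connected in the special covering and $P$ is quasitrivial, the character-type invariants of $G$ are governed by $F$, and there is a functorial isomorphism $\Be_\omega(G)\cong\Be_\omega(\widehat F)$ (and likewise $\Be(G)\cong\Be(\widehat F)$), compatible with all the pairings; this is the reductive analogue of the computation of $\mathrm{Br}_{\mathrm a}$ of a reductive group in terms of its special covering.

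**Diagram chase.** With these identifications in hand, the sequence
\begin{equation*}
1\to\overline{G(K)}\to\prod_{v\in\Omega}G(K_v)\to\Be_\omega(G)^D
\end{equation*}
becomes a formal consequence of the torus/finite-module duality sequence applied to $F$, via a diagram chase across the long exact cohomology sequences of $1\to F\to G'\to G\to 1$ locally and globally, arranged in a commutative ladder; exactness at $\prod_v G(K_v)$ follows because an element of $\prod_v G(K_v)$ orthogonal to $\Be_\omega(G)$ has coboundary in $H^1(K_v,F)$ orthogonal to $\Be_\omega(\widehat F)$, hence (by the duality for $F$) lies in the closure of the global image, so after translating by a global point it lifts to $\prod_v G'(K_v)=\overline{G'(K)}$, and then descends to $\overline{G(K)}$. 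One must check compatibility of the Brauer–Manin pairing on $G$ with the cup-product pairing on $H^1(-,F)$ under the coboundary — this is the standard compatibility, but it has to be spelled out.

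**Main obstacle.** The technically delicate point I expect to be the real work is the identification $\Be_\omega(G)\cong\Be_\omega(\widehat F)$ (and its compatibility with pairings), together with the surjectivity/exactness bookkeeping when passing from $G$ to $G^n$ and back; a subtlety is that the coboundary $G(K_v)\to H^1(K_v,F)$ need not be surjective, so one has to work with its image and verify that the relevant cokernels still inject into the dual groups — this is where the hypothesis that we only claim a three-term sequence (not yet the full four-term sequence with $\Be(G)^D$) is used, the surjectivity onto $\Be(G)^D$ being deferred to a later lemma. Keeping track of exactly which maps are surjective, and handling the non-abelian $H^1(K,G')$ correctly via the quasitriviality of $P$ and simple-connectedness of $H$, will be the crux.
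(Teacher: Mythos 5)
Your overall architecture is essentially the paper's: use the special covering $1\to F\to H\times_K P\to G\to 1$ (after reducing to a $G$ admitting one via the previous lemma on $G^n$), exploit weak approximation for $H\times_K P$ and the vanishing of $H^1(K,H\times_K P)$, invoke the Poitou--Tate-type exactness for the finite module $F$, and chase a ladder comparing the coboundary $G(K_v)\to H^1(K_v,F)$ with the Brauer pairing (the compatibility being Lemme 8.11 of \cite{sansuc1981groupe}). However, the step you single out as the crux is wrong over these fields: there is no functorial isomorphism $\Be_\omega(G)\cong\Sha^1_\omega(\widehat F)$, and it is not true that $\Be_\omega(P)=0$ (equivalently $\Be_\omega(H\times_K P)=0$) for a quasi-trivial torus $P$. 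What holds is only the exact sequence $0\to H^1(K,\widehat F)\to\Br_{\text{a}}(G)\to\Br_{\text{a}}(H\times_K P)\simeq H^2(K,\widehat P)$, which at the level of the $S$- or $\omega$-groups gives $1\to\Sha^1_S(\widehat F)\to\Be_S(G)\to\Be_S(H\times_K P)\simeq\Sha^2_S(\widehat P)$. Over a number field the last term vanishes by Chebotarev, which is presumably the intuition you imported; over fields of type (a) or (b) one only has $\Be_\omega(P)\simeq\Be(P)$ (Prop.\ 2.6 of \cite{Colliot_Th_l_ne_2015}), and this group need not vanish --- indeed the paper itself, in the proof of Proposition \ref{thm3}, shows the pairing of $P(K_\Omega)$ against $\Be_\omega(P)$ is zero by using the full four-term sequence of Theorem \ref{4.1}, precisely because it cannot simply assert $\Be_\omega(P)=0$.

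The error is repairable, because in the direction actually needed (a family orthogonal to $\Be_\omega(G)$ lies in $\overline{G(K)}$) one only uses the injection $\Sha^1_S(\widehat F)\hookrightarrow\Be_S(G)$, not your isomorphism: orthogonality to $\Be_S(G)$ forces $\delta((g_v))$ to be orthogonal to $\Sha^1_S(\widehat F)$, the exactness of $H^1(K,F)\to\prod_{v\in S}H^1(K_v,F)\to\Sha^1_S(\widehat F)^D\to 1$ produces a global class matching the coboundary on $S$, and then surjectivity of $G(K)\to H^1(K,F)$ (from $H^1(K,H\times_K P)=1$) together with weak approximation for $H\times_K P$ finishes the chase --- this is exactly the paper's argument, which keeps the term $\Be_S(H\times_K P)^D\simeq\Sha^2_S(\widehat P)^D$ in its bottom row instead of discarding it. So: replace the claimed identifications $\Be_\omega(G')=0$ and $\Be_\omega(G)\cong\Sha^1_\omega(\widehat F)$ by the exact sequence above, and note that only the inclusion is needed; with that correction your proof goes through and coincides with the paper's.
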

\begin{proof}
The proof is exactly the same as in Lemma 9.6 of \cite{Colliot_Th_l_ne_2015}, using the special covering and the fact that $H$ and $P$ both satisfy weak approximation (so does their product) and $H^1(K,H)=H^1(K,P)=H^1(K,H\times_K P)=0$. Let $S$ be a finite set of places in $\Omega$.
We have a commutative diagram where the rows are exact and the columns are complexes.
\begin{center}
\begin{tikzcd}
H\times_K P(K) \arrow[r] \arrow[d]                 & G(K) \arrow[r] \arrow[d]                 & {H^1(K,F)} \arrow[r] \arrow[d]                 & 1 \\
\prod_{v\in S}H\times_K P(K_v) \arrow[d] \arrow[r] & \prod_{v\in S}G(K_v) \arrow[d] \arrow[r] & {\prod_{v\in S}H^1(K_v,F)} \arrow[d] \arrow[r] & 1 \\
 \Be_S(H\times P)^D\arrow[r]                  & \Be_S(G)^D \arrow[r]                     & \Sha^1_S(\widehat F)^D \arrow[r]               &1
\end{tikzcd}
\end{center}
Recall that we have an isomorphism $\Be_S(H\times P)\simeq \Sha^2_S(\widehat{P})$.
The exactness of the last row comes from the fact that $H^1(K,\widehat F)\simeq\Ker(\Br_{\text{a}}(G)\rightarrow\Br_{\text{a}}(H\times_K P))$ and $\Br_{\text{a}}(H\times_K P)\simeq H^2(K,\widehat P)$ (Cor 7.4 and Lemme 6.9 of \cite{sansuc1981groupe}). The commutativity of the right-bottom square can be deduced from Lemme 8.11 of \cite{sansuc1981groupe}.

Since $G$ also admits a flasque resolution, with the same proof as in Corollary 9.9 of \cite{Colliot_Th_l_ne_2015}, we have the following result taking the limit on $S$:
$$1\rightarrow\overline{G(K)}\rightarrow G(K_\Omega)\rightarrow\Be_\omega(G)^D$$
\end{proof}

\begin{lemma}
The following sequence is exact
$$1\rightarrow \overline{G(K)}\rightarrow\prod_{v\in \Omega}G(K_v)\rightarrow \Be_\omega(G)^D$$
where $G$ is a connected linear group over $K$.
\end{lemma}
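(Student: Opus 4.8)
The plan is to reduce to the reductive case established in the previous lemma by splitting off the unipotent radical. Write $G^{\text{u}}$ for the unipotent radical of $G$ and $G^{\text{red}}=G/G^{\text{u}}$ for the reductive quotient. The quotient map $G\to G^{\text{red}}$ is a torsor under $G^{\text{u}}$; since $G^{\text{u}}$ is a successive extension of copies of $\Ga$ and $G^{\text{red}}$ is affine, this torsor admits a section (alternatively, invoke the Levi decomposition, which is available in characteristic zero). Hence multiplication gives an isomorphism of $K$-varieties $G\cong G^{\text{u}}\times G^{\text{red}}$, with $G^{\text{u}}$ in turn $K$-isomorphic to an affine space $\A_K^n$, $n=\dim G^{\text{u}}$.

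First I would record two consequences of this decomposition. On the one hand, $G^{\text{u}}\cong\A^n_K$ satisfies weak approximation, so $G^{\text{u}}(K)$ is dense in $\prod_{v\in\Omega}G^{\text{u}}(K_v)$. On the other hand, the characters of $G$ all factor through $G^{\text{red}}$, and homotopy invariance of the Brauer group gives $\Br(G)\simeq\Br(\A^n_K\times G^{\text{red}})\simeq\Br(G^{\text{red}})$; these isomorphisms are compatible with the structure maps to $\Spec K$ and with base change to $\bar K$ and to the $K_v$, so they induce $\Br_1(G)\simeq\Br_1(G^{\text{red}})$, $\Br_0(G)\simeq\Br_0(G^{\text{red}})$, hence $\Br_{\text{a}}(G)\simeq\Br_{\text{a}}(G^{\text{red}})$, and finally $\Be_\omega(G)\simeq\Be_\omega(G^{\text{red}})$ compatibly with the Brauer--Manin pairings.

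Next, using the variety isomorphism $G\cong G^{\text{u}}\times G^{\text{red}}$ — which identifies $G(K)=G^{\text{u}}(K)\times G^{\text{red}}(K)$ and $\prod_{v\in\Omega}G(K_v)=\big(\prod_{v\in\Omega}G^{\text{u}}(K_v)\big)\times\big(\prod_{v\in\Omega}G^{\text{red}}(K_v)\big)$, the $v$-adic topology on the points of a product variety being the product topology — I would compute both ends of the sequence. Since the closure of a product set in a product of topological spaces is the product of the closures, and $G^{\text{u}}$ satisfies weak approximation,
$$\overline{G(K)}=\Big(\prod_{v\in\Omega}G^{\text{u}}(K_v)\Big)\times\overline{G^{\text{red}}(K)}.$$
Moreover, every class of $\Be_\omega(G)$ is pulled back from $\Be_\omega(G^{\text{red}})$ and local evaluation of Brauer classes commutes with pullback, so the map $\prod_{v\in\Omega}G(K_v)\to\Be_\omega(G)^D$ is the composite of the projection $\prod_{v\in\Omega}G(K_v)\to\prod_{v\in\Omega}G^{\text{red}}(K_v)$ with $\prod_{v\in\Omega}G^{\text{red}}(K_v)\to\Be_\omega(G^{\text{red}})^D$. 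Its kernel is thus $\big(\prod_{v\in\Omega}G^{\text{u}}(K_v)\big)\times\ker\big(\prod_{v\in\Omega}G^{\text{red}}(K_v)\to\Be_\omega(G^{\text{red}})^D\big)$, which by the reductive case equals $\big(\prod_{v\in\Omega}G^{\text{u}}(K_v)\big)\times\overline{G^{\text{red}}(K)}=\overline{G(K)}$, giving the claimed exactness.

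The argument is essentially bookkeeping once the reductive case is in hand; the only points needing care are that the splitting $G\cong G^{\text{u}}\times G^{\text{red}}$ is used purely as an isomorphism of varieties (so that it is automatically compatible with passing to $K_v$-points and with their $v$-adic topologies), and that homotopy invariance $\Br(\A^n_X)\simeq\Br(X)$ is applied compatibly with the structure maps from $\Spec K$, so that the identification $\Be_\omega(G)\simeq\Be_\omega(G^{\text{red}})$ respects the two Brauer--Manin pairings. I expect no genuine obstacle beyond these verifications.
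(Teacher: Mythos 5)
Your proof is correct, and it follows the same dévissage as the paper --- reduction along $1\rightarrow G^{\mathrm{u}}\rightarrow G\rightarrow G^{\mathrm{red}}\rightarrow 1$ to the reductive case --- but you execute the reduction differently. The paper does not trivialize the fibration: it keeps the exact sequence, uses the vanishing of $H^1(K,G^{\mathrm{u}})$ and $H^1(K_v,G^{\mathrm{u}})$ together with weak approximation for $G^{\mathrm{u}}$ and functoriality of the Brauer--Manin pairing, and concludes by a diagram chase in the induced diagram of $K$- and $K_\Omega$-points mapping to $\Be_\omega(G)^D\rightarrow\Be_\omega(G^{\mathrm{red}})^D$. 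You instead split $G$ as a $K$-variety, $G\cong G^{\mathrm{u}}\times G^{\mathrm{red}}\cong \A^n_K\times G^{\mathrm{red}}$ (Levi decomposition, or triviality of torsors under split unipotent groups over an affine base in characteristic zero), identify $\Be_\omega(G)\simeq\Be_\omega(G^{\mathrm{red}})$ via homotopy invariance (equivalently, $\Br_{\mathrm{a}}(\A^n_K)=0$ and Sansuc's Lemme 6.6, which is exactly what the paper invokes in the following lemma), and then compute closures and kernels of product sets directly. What your route buys is the elimination of the diagram chase and of the openness/lifting arguments (no need for $H^1(K,G^{\mathrm{u}})=0$ as a cohomological input, since the section does the lifting for you), at the cost of the compatibility checks you list: that the splitting is defined over $K$, that the Brauer identification commutes with base change to the $K_v$ (so the ``almost all $v$'' conditions match and $\Be_\omega(G)$ is exactly the pullback of $\Be_\omega(G^{\mathrm{red}})$), and that evaluation in the pairing commutes with pullback. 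These checks are all standard and you flag them explicitly, so I see no gap.
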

\begin{proof}
We use the resolution 
$1\rightarrow G^{\text u}\rightarrow G\rightarrow G^{\text{red}}\rightarrow 1$ and the induced diagram
\begin{center}

\begin{tikzcd}
G^{\text u}(K) \arrow[r] \arrow[d] & G(K) \arrow[r] \arrow[d]            & G^{\text{red}}(K) \arrow[r] \arrow[d]        & 1 \\
G^{\text u}(K_\Omega) \arrow[r]    & G(K_\Omega) \arrow[r] \arrow[d]     & G^{\text{red}}(K_\Omega) \arrow[d] \\
                                   & \Be_\omega(G)^D \arrow[r] & \Be_{\omega}(G^{\text{red}})^D.               &  
\end{tikzcd}
\end{center}
taking into account the vanishing of $H^1(K,G^{\text u})$. The unipotent radical $G^{\text{u}}$ satisfies weak approximation (see Section \ref{notation}). The exactness of the rightmost column is known from the previous lemma, then a diagram chasing gives the result.
\end{proof}

\begin{lemma}
The sequence
$G(K_\Omega)\rightarrow\Be_\omega(G)^D\rightarrow\Be(G)^D$ is exact, where $G$ is a connected linear group over $K$.
\end{lemma}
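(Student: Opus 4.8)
The plan is to dualise the last arrow, dispose of the obvious inclusion, and then reduce, via a special covering, to the torus case (already known) together with Poitou--Tate duality for finite modules. By construction the map $\Be_\omega(G)^D\to\Be(G)^D$ is the dual of the inclusion $\Be(G)\hookrightarrow\Be_\omega(G)$, so, $A\mapsto A^D$ being exact, it is surjective and its kernel is the annihilator of $\Be(G)$ in $\Be_\omega(G)^D$, namely $(\Be_\omega(G)/\Be(G))^D$. The inclusion $\image(G(K_\Omega)\to\Be_\omega(G)^D)\subseteq\ker(\Be_\omega(G)^D\to\Be(G)^D)$ is immediate: if $\alpha\in\Be(G)$ then the image of $\alpha$ in $\Br_{\text{a}}(G_{K_v})$ vanishes for every $v$, so $\langle P_v,\alpha\rangle=0$ for every $v$ and every $(P_v)\in G(K_\Omega)$, whence $\sum_v\langle P_v,\alpha\rangle=0$. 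The content is therefore the reverse inclusion: every $\phi\in\Be_\omega(G)^D$ that kills $\Be(G)$ is represented by a family $(P_v)$.

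To prove this I would first reduce, as in the proofs of the preceding lemmas, to the case where $G$ is reductive and admits a special covering $1\to F\to\widetilde G\to G\to1$ with $\widetilde G=H\times_K P$: the passage from $G$ to $G^{\text{red}}$ leaves $\Be_\omega$, $\Be$ and the pairing unchanged and $G(K_\Omega)\twoheadrightarrow G^{\text{red}}(K_\Omega)$ since $H^1(K_v,G^{\text u})=0$, and the passage to a power is harmless since every term takes products to products. For such a $G$ I would take the limit over finite $S\subseteq\Omega$ of the commutative diagram used above for the reductive case, invoking $H^1(K_v,\widetilde G)=1$ and, on the limit side, the flasque-resolution argument of Corollary 9.9 of \cite{Colliot_Th_l_ne_2015}. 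This produces two compatible two-step filtrations, intertwined by the Brauer--Manin pairing functorial along $\widetilde G\to G$: on the source, $0\to\image(\widetilde G(K_\Omega))\to G(K_\Omega)\to\prod_v H^1(K_v,F)\to0$; and, using $H^1(K,\widehat F)\cong\ker(\Br_{\text{a}}(G)\to\Br_{\text{a}}(\widetilde G))$, on the target the exact sequence $0\to Q^D\to\Be_\omega(G)^D\to\Sha^1_\omega(\widehat F)^D\to0$ with $Q=\image(\Be_\omega(G)\to\Be_\omega(\widetilde G))$, together with the analogous sequence obtained by replacing $\Be_\omega(G),\Be_\omega(\widetilde G),\Sha^1_\omega(\widehat F)$ by $\Be(G),\Be(\widetilde G),\Sha^1(\widehat F)$.

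It then suffices to understand each graded piece. On the sub-object: since $\Br_{\text{a}}(H)=0$, the statement for $\widetilde G$ is the statement for the quasi-trivial torus $P$, i.e. the exactness of $\prod_v P(K_v)\to\Be_\omega(P)^D\to\Be(P)^D$, which is part of the exact sequence \ref{main} of \cite{Colliot_Th_l_ne_2015} (resp. \cite{izquierdo2019dualite}). On the quotient: one needs the exactness of $\prod_v H^1(K_v,F)\to\Sha^1_\omega(\widehat F)^D\to\Sha^1(\widehat F)^D$, which is the relevant segment of the Poitou--Tate sequence for the finite module $F$ over $K$ (\cite{izquierdo2019dualite}, \cite{izquierdo2021local}). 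Granting these two inputs, a diagram chase identifies $\image(G(K_\Omega)\to\Be_\omega(G)^D)$ with $\ker(\Be_\omega(G)^D\to\Be(G)^D)$, which is the claim. The step I expect to be the main obstacle is the homological bookkeeping: one must check that the two filtrations are strictly compatible with the inclusion $\Be(G)\subseteq\Be_\omega(G)$ --- so that, by the snake lemma, $\Be_\omega(G)/\Be(G)$ really has graded pieces dual to the cokernels furnished by the two inputs --- and one must make the limit over $S$ rigorous, handling the $\prod$-versus-$\bigoplus$ and topology issues as in \S9 of \cite{Colliot_Th_l_ne_2015}.
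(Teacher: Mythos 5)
Your route is genuinely different from the paper's. The paper proves this lemma with a coflasque resolution $1\to P\to G'\to G\to 1$ (with $G'$ an extension of a coflasque torus $T$ by $G^{\text{sc}}$), reducing the statement for $G$ to the known sequence for the coflasque torus $T$ via the surjection $\Be_\omega(T)\twoheadrightarrow\Be_\omega(G')$; no finite modules appear. You instead use the special covering $1\to F\to\widetilde G=H\times_K P\to G\to1$ and try to conclude from (i) the statement for the quasi-trivial torus $P$ and (ii) a duality statement for $F$. Part (ii) of your plan is fine: the exactness of $\prod_v H^1(K_v,F)\to\Sha^1_\omega(\widehat F)^D\to\Sha^1(\widehat F)^D$ follows from perfect local duality $H^1(K_v,F)\times H^1(K_v,\widehat F)\to\QQ/\ZZ(-1)$ and finiteness of the local groups (so $\prod_v H^1(K_v,F)$ is the dual of $\bigoplus_v H^1(K_v,\widehat F)$ and surjects onto $(\Sha^1_\omega(\widehat F)/\Sha^1(\widehat F))^D$), together with the Sansuc (Lemme 8.11)-type compatibility between this pairing and the Brauer--Manin pairing, which you should invoke explicitly.

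The gap is in the other graded piece, which you treat as bookkeeping. Put $B_\omega:=\image(\Be_\omega(G)\to\Be_\omega(\widetilde G))$ and $B:=\image(\Be(G)\to\Be(\widetilde G))$; the snake lemma gives $0\to\Sha^1_\omega(\widehat F)/\Sha^1(\widehat F)\to\Be_\omega(G)/\Be(G)\to B_\omega/B\to0$, and your chase needs the characters of $B_\omega/B$ to come from adelic points. But input (i) contributes nothing there: since $P$ is quasi-trivial, $\Be_\omega(\widetilde G)=\Be(\widetilde G)$, so every element of $B_\omega$ is locally trivial at every place and every point of $\widetilde G(K_\Omega)$ pairs trivially with it (the map $\widetilde G(K_\Omega)\to\Be_\omega(\widetilde G)^D$ is zero). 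Hence your diagram only closes if $B_\omega=B$. Over number fields this is automatic because $\Be(\widetilde G)\simeq\Sha^2(\widehat P)=0$; over the fields of this paper $\Sha^2(\widehat P)$ need not vanish, so $B_\omega=B$ is a genuine statement requiring proof: for $\alpha\in\Be_\omega(G)$ one has $\alpha_v\in H^1(K_v,\widehat F)$ for all $v$ and $\alpha_v=0$ for almost all $v$, and one must show this family is the localization of a global class of $H^1(K,\widehat F)$, so that $\alpha\in\Be(G)+H^1(K,\widehat F)$. This can be done, but it uses exactly the global ingredients your sketch never invokes: the Poitou--Tate exactness $H^1(K,\widehat F)\to\bigoplus_v H^1(K_v,\widehat F)\to H^1(K,F)^D$ over these fields, the fact that every class in $H^1(K,F)$ lifts to a point of $G(K)$ (since $H^1(K,\widetilde G)=1$), and reciprocity for $\Br(K)$. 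Without this step the two inputs you list do not suffice; with it your special-covering argument becomes a valid, but genuinely different and somewhat longer, alternative to the paper's coflasque-resolution proof.
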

\begin{proof}
As in the previous lemmas, we can first treat the case where $G$ is reductive. Suppose now that $G$ is reductive, we use a coflasque resolution $$1\rightarrow P\rightarrow G^\prime\rightarrow G\rightarrow 1$$ where $P$ is a quasitrivial torus and $G^\prime$ fits into the exact sequence
$$1\rightarrow G^{\text{sc}}\rightarrow G^\prime\rightarrow T\rightarrow 1$$ where $T$ is a coflasque torus and $G^{\text{sc}}$ is a semisimple simply connected group.
Suppose that the exactness of the sequence in question is known for $G^\prime$ (replacing $G$). Then we can prove the exactness for $G$ by chasing in the following diagram
\begin{center}
\begin{tikzcd}
                                              & \prod_{v\in S}G^\prime(K_v) \arrow[d] \arrow[r] & \prod_{v\in S}G(K_v) \arrow[d] \arrow[r] & 0 \\
\Be_\omega(P)^D \arrow[r] \arrow[d, "\simeq"] & \Be_\omega(G^\prime)^D \arrow[r] \arrow[d]      & \Be_\omega(G)^D \arrow[r] \arrow[d]      & 0 \\
\Be(P)^D \arrow[r]                            & \Be(G^\prime)^D \arrow[r]                       & \Be(G)^D \arrow[r]                       & 0.
\end{tikzcd}\end{center}
The leftmost arrow is an isomorphism by Proposition 2.6 of \cite{Colliot_Th_l_ne_2015}.
The last two rows are exact, and this can be proved by chasing the following diagram, as done in the proof of Lemma 4.4 of \cite{borovoi1996brauer}.

\begin{center}
    \begin{tikzcd}
            &                                            & \Br(K) \arrow[ld, "f_1" description] \arrow[d, "f_2" description]         &                           \\
0 \arrow[r] & \Br_1(G) \arrow[r, "f_4"] \arrow[d, "f_6"] & \Br_1(G^\prime) \arrow[r, "f_5"] \arrow[d, "f_7"]                                                          & \Br_{\text{a}}(P) \arrow[d, "f_3"] \\
0 \arrow[r] & \Br_1(G_v) \arrow[r, "f_9"]                & \Br_1(G_v^\prime) \arrow[r, "f_{10}"]                                                                         & \Br_{\text{a}}(P_v)                \\
            &                                            & \Br(K_v) \arrow[lu, "f_{11}" description] \arrow[u, "f_{8}" description] &                          
\end{tikzcd}
\end{center}
(The exactness of the rows can be deduced from Proposition 6.10 of \cite{sansuc1981groupe} and the fact that $\Pic(P)=H^1(K,\widehat P)=0$ since $\widehat P$ is a permutation module.)

\par
The exactness of the sequence in question is known for $T$ (Corollary 9.9 of \cite{Colliot_Th_l_ne_2015} for $K$ of type (a), and Theorem 4.9 of \cite{izquierdo2019dualite} for $K$ of type (b)), by a similar diagram chasing, we can prove the result for $G^\prime$, thus completing the proof. 
The essential condition we need in this diagram chasing is that we should have an injection $\Be_\omega(G^\prime)^D\hookrightarrow\Be_\omega(T)^D$, or equivalently a surjection $\Be_\omega(T)\twoheadrightarrow\Be_\omega(G^\prime)$. 
This is indeed true because we have an exact sequence $\Br_{\text{a}}(T)\rightarrow \Br_{\text{a}}(G^\prime)\rightarrow \Br_{\text{a}}(G^{\text{sc}})$  and $\Br(G^{\text{sc}})=\Br(K)$ since $G^{\text{sc}}$ a simply connected group over a field of characteristic 0 (c.f. Corollary in \S 0 of \cite{gille2009brauer}). 
\par
Now for a connected linear group $G$ (not necessarily reductive), we use again the resolution 
$1\rightarrow G^{\text u}\rightarrow G\rightarrow G^{\text{red}}\rightarrow 1$ and the induced commutative diagram 
\begin{center}
\begin{tikzcd}
& \prod_{v\in S}G(K_v) \arrow[d] \arrow[r] & \prod_{v\in S}G^{\text{red}}(K_v) \arrow[d] \arrow[r] & 1 \\
1 \arrow[r] & \Be_\omega(G)^D \arrow[r] \arrow[d]      & \Be_\omega(G^{\text{red}})^D \arrow[d]      &  \\
& \Be(G)^D \arrow[r]                       & \Be(G^{\text{red}})^D       & 
\end{tikzcd}\end{center}
taking into account the vanishing of $H^1(K_v,G^{\text u})$. The injectivity of the middle line follows from the fact that $\Br_{\text a}(G^{\text u})=\Br_{\text a}(\A_K^n)=(\Br_{\text a}(\A_K^1))^n=0$ (c.f. Lemme 6.6 of \cite{sansuc1981groupe}, Theorem 4.5.1(viii) of \cite{colliot2021brauer}). Then a diagram chasing gives the desired result.
\end{proof}

\section{Weak approximation for homogeneous spaces}
Now we look at weak approximation for homogeneous spaces over such fields $K$. The usual Brauer-Manin obstruction we used in last section is not enough in this case, as shown by the following construction of examples:
\begin{prop}\label{thm2}
Let $Q$ be a flasque torus such that
$$H^1(K,Q)\rightarrow\oplus_{v\in\Omega}H^1(K_v,Q)\rightarrow\Sha^1_{\omega}(\widehat Q)^D$$ is not exact. Such a torus exists (see Corollary 9.16 of \cite{Colliot_Th_l_ne_2015} for a construction.) 
Embed $Q$ into some $\SL_n$ and let $Z:=\SL_n/Q$. 
Then $\overline {Z(K)}\subsetneq Z(K_\Omega)^{\Be_\omega}.$
\end{prop}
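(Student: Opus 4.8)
The strategy is to exploit the exact sequence for tori (the displayed sequence \eqref{main}, which holds for $Q$ since $Q$ is in particular a torus) together with the long exact sequence in cohomology coming from $1 \to Q \to \SL_n \to Z \to 1$, and to compare the weak approximation defect of $Z$ with the Tate--Shafarevich-type defect of $Q$ that is hypothesized to be nonzero. First I would write down, for $Z = \SL_n/Q$, the boundary map $\partial: Z(K) \to H^1(K,Q)$ and its local analogues $\partial_v: Z(K_v) \to H^1(K_v,Q)$; since $H^1(K,\SL_n) = 1$ and $H^1(K_v,\SL_n)=1$ (property (iii) of the fields in question, applied over $K$ and over $K_v$), these boundary maps are surjective, and $\SL_n(K) \to Z(K)$, $\SL_n(K_v)\to Z(K_v)$ are surjective onto the fibres. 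Because $\SL_n$ is $K$-rational (open subset of affine space) it satisfies weak approximation, so the closure $\overline{Z(K)}$ in $Z(K_\Omega)$ is controlled entirely by the image of the diagonal $Z(K)$ under $\prod_v \partial_v$: concretely, a point $(P_v) \in Z(K_\Omega)$ lies in $\overline{Z(K)}$ if and only if, after modifying by an element of $\SL_n(K_\Omega)$ coming from $\overline{\SL_n(K)} = \SL_n(K_\Omega)$, the class $(\partial_v(P_v))_v \in \prod_v H^1(K_v,Q)$ is approximated by classes coming from $H^1(K,Q)$ — more precisely it must lie in the closure of the image of $H^1(K,Q) \to \prod_v H^1(K_v,Q)$, where $H^1(K_v,Q)$ carries the natural topology (it is finite, in fact, as $Q$ is a torus over a field of cohomological dimension $2$ with the relevant finiteness, so "closure'' here just means "lies in the image of the restriction after enlarging the finite set $S$'').

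Next I would identify the Brauer set $Z(K_\Omega)^{\Be_\omega}$ in the same terms. The key input is the compatibility of the Brauer--Manin pairing on $Z$ with the duality pairing on $H^1$ of the torus $Q$: since $Q$ is flasque, $\Br_{\mathrm a}(Z)$ is closely tied to $H^1(K,\widehat Q)$ (here one uses that $\SL_n$ is simply connected semisimple so $\Br_{\mathrm a}(\SL_n) = 0$ and $\Pic(\SL_n)=0$, giving $\Br_{\mathrm a}(Z) \simeq \mathrm{Pic}(Q) \oplus (\text{something})$ — for $Q$ flasque, $\mathrm{Pic}(Q) = H^1(K,\widehat Q)$, which matches the group $\widehat Q$ that appears in $\Sha^1_\omega(\widehat Q)^D$). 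Under this identification, the condition "$(P_v)$ is orthogonal to $\Be_\omega(Z)$'' translates, via the local and global duality pairings between $H^1(\cdot, Q)$ and $H^1(\cdot,\widehat Q)$, into the condition "$(\partial_v(P_v))_v$ is orthogonal to $\Sha^1_\omega(\widehat Q)$'', i.e. $(\partial_v(P_v))_v$ lies in the kernel of $\bigoplus_v H^1(K_v,Q) \to \Sha^1_\omega(\widehat Q)^D$. Thus $Z(K_\Omega)^{\Be_\omega}$ is, up to the (dense, surjective) $\SL_n$-action, the preimage under $\prod_v\partial_v$ of that kernel, while $\overline{Z(K)}$ is the preimage of the image of $H^1(K,Q) \to \bigoplus_v H^1(K_v,Q)$.

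Putting the two descriptions together, the inclusion $\overline{Z(K)} \subseteq Z(K_\Omega)^{\Be_\omega}$ is exactly the statement that the image of $H^1(K,Q)$ is contained in the kernel of the map to $\Sha^1_\omega(\widehat Q)^D$, which is the general Brauer--Manin formalism; and the inclusion is an equality if and only if the sequence
\[
H^1(K,Q) \to \bigoplus_{v\in\Omega} H^1(K_v,Q) \to \Sha^1_\omega(\widehat Q)^D
\]
is exact. By the hypothesis on $Q$, this sequence is \emph{not} exact, so there is a class $(\xi_v)_v \in \bigoplus_v H^1(K_v,Q)$ lying in the kernel of the second map but not in the image of the first; lifting each $\xi_v$ to a point $P_v \in Z(K_v)$ via the surjective $\partial_v$ (and taking $P_v$ to be the image of $1 \in \SL_n(K_v)$, hence $\partial_v(P_v)=0$, for the cofinitely many $v$ where $\xi_v = 0$), we obtain $(P_v) \in Z(K_\Omega)^{\Be_\omega} \setminus \overline{Z(K)}$, giving the strict inclusion. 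I expect the main obstacle to be the bookkeeping in the second paragraph: verifying carefully that the Brauer--Manin pairing on $Z$ restricted to $\Be_\omega(Z)$ corresponds, under $\partial$ and the identification $\Br_{\mathrm a}(Z) \leftrightarrow H^1(K,\widehat Q)$ (valid because $Q$ is flasque and $\SL_n$ simply connected), precisely to the Poitou--Tate-style local pairing $H^1(K_v,Q) \times H^1(K_v,\widehat Q) \to \QQ/\ZZ$ that defines $\Sha^1_\omega(\widehat Q)^D$ — i.e. the commutativity of the relevant square relating the boundary maps to cup products. This is where one must invoke the functoriality results of Sansuc (the description of $\Br_{\mathrm a}$ of a homogeneous space) and the compatibility of boundary maps with duality; once that square commutes, everything else is formal diagram-chasing with the exact sequences already available in the excerpt.
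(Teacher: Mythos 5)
Your route is essentially the one the paper takes: exploit the torsor $\SL_n\rightarrow Z$ under $Q$, the surjectivity of the boundary maps $Z(K)\rightarrow H^1(K,Q)$ and $Z(K_v)\rightarrow H^1(K_v,Q)$ (vanishing of $H^1$ of $\SL_n$ over $K$ and $K_v$), the identification of $\Be_\omega(Z)$ with $\Sha^1_\omega(\widehat Q)$ coming from $\Br_{\text{a}}(\SL_n)=0$, and then lift a witness of the hypothesized non-exactness to an adelic point orthogonal to $\Be_\omega(Z)$. The compatibility you flag as the ``main obstacle'' (Brauer--Manin pairing on $Z$ versus the local duality pairings for $Q$, via Sansuc's functoriality) is exactly the commutativity the paper asserts for its diagram, so that part of the plan is fine, as is the direction of your closure criterion that is actually needed (if $(P_v)\in\overline{Z(K)}$ then, by continuity of the evaluation maps and discreteness of $H^1(K_v,Q)$, every finite truncation of $(\partial_v P_v)$ comes from $H^1(K,Q)$).

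The genuine gap is in your last step, where you conclude $(P_v)\notin\overline{Z(K)}$ from the mere fact that $(\xi_v)$ is not in the image of $H^1(K,Q)\rightarrow\bigoplus_v H^1(K_v,Q)$. By your own (correct) criterion, ruling out $(P_v)\in\overline{Z(K)}$ requires that $(\xi_v)$ be outside the \emph{closure} of that image, i.e.\ that for some finite $S$ the truncation $(\xi_v)_{v\in S}$ is not in the image of $H^1(K,Q)\rightarrow\prod_{v\in S}H^1(K_v,Q)$; being outside the global image does not by itself exclude this. The finiteness you invoke (each $H^1(K_v,Q)$ finite, from cohomological dimension $2$) does not close this loop. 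What is needed --- and this is the only place where the hypothesis that $Q$ is \emph{flasque} actually intervenes --- is that $H^1(K_v,Q)=0$ for almost all $v$, equivalently that $\prod_{v\in\Omega}H^1(K_v,Q)$ is finite (Proposition 9.1 of \cite{Colliot_Th_l_ne_2015}). Then the image is closed (indeed the fibre of the continuous map $Z(K_\Omega)\rightarrow\prod_v H^1(K_v,Q)$ over $(\xi_v)$ is open, which is how the paper argues: a point of $\overline{Z(K)}$ in that open fibre would produce a rational point mapping to $(\xi_v)$, a contradiction), and your conclusion follows. Note that the one appeal you make to flasqueness --- $\Pic(Q)\simeq H^1(K,\widehat Q)$ --- is not where it is needed, since that holds for any torus; as written, your argument never uses flasqueness at the crucial point, so the final deduction is unjustified until you insert the almost-everywhere vanishing of the local $H^1$.
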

\begin{proof}
We consider the following commutative diagram with exact rows:
\begin{center}
\begin{tikzcd}
                 & Z(K) \arrow[r,"f_1"] \arrow[d,"f_2"]                 & {H^1(K,Q)} \arrow[r,] \arrow[d,"f_3"]                 & 1 \\
 & Z(K_\Omega)\arrow[d,"f_5"]  \arrow[r,"f_4"] & {\prod_{v\in \Omega}H^1(K_v,Q)} \arrow[d,"f_6"] \arrow[r] & 1 \\ 1\arrow[r]&
 \Be_\omega(Z) \arrow[r,"f_{7}"]                     &  \Sha^1_\omega(\widehat Q)^D\arrow[r]& 1.
\end{tikzcd}
\end{center}
The vanishing of $\Br_{\text{a}}(\SL_n)$ on the bottom-left corner comes from the Corollary of \cite{gille2009brauer} (section 0). We prove the result by diagram chasing. Since the right column is not exact, there exists
$a\in \prod_{v\in \Omega}H^1(K_v,Q)$ such that $f_6(a)$ vanishes but $a\notin f_3(H^1(K,Q)).$ Since $f_4$ is surjective, we can find $b\in Z(K_\Omega)$ such that $f_4(b)=a$. By the commutativity of the bottom square, $f_5(b)$ vanishes. We prove that $b\notin \overline{Z(K)}$ by contradiction. We suppose $b\in \overline {Z(K)}$. The torus $Q$ being flasque implies that $\prod_{v\in \Omega}H^1(K_v,Q)$ is finite (c.f. Proposition 9.1 of \cite{Colliot_Th_l_ne_2015}), thus the preimage $f_4^{-1}(a)$ is open ($f_4$ is continuous, c.f. \cite{vcesnavivcius2015topology}), containing $b\in \overline{Z(K)}$, so we should be able to find $c\in Z(K)$ lying in $f_4^{-1}(a)$. Then $f_3(f_1(c))=a$ by the commutativity of the top square, contradicting $a\notin \im f_3$. Therefore, we found $b\in Z(K_\Omega)^{\Be_\omega}\backslash\overline{Z(K)}.$
\end{proof}

Therefore, we need to find other obstructions. In the rest of this section, we consider homogeneous spaces $Z$ under a connected linear group $G$ with geometric stabilizer $\bar H$ such that $G^{\text{ss}}$ is simply connected and $\bar H^{\text{torf}}$ is abelian. This is the assumption already used in \cite{izquierdo2021local} and \cite{borovoi1996brauer}, and is satisfied by every homogeneous space under a connected linear $K$-group with connected stabilizers (c.f. Lemma 5.2 in \cite{borovoi1996brauer}).
\par

We can find $Z\leftarrow W\rightarrow W^\prime$ such that
\begin{itemize}
    \item $W$ is a $K$-homogeneous space under $G\times T,$ 
    \item $T$ is a quasitrivial torus into which $H^{\text{torf}}$ injects, where $H^{\text{torf}}$ is the canonical $K$-form of $\bar H^{\text{torf}}$ associated to $Z$,
    \item $W\rightarrow Z$ is a $T$-torsor,
    \item $W^\prime$ is the quotient variety $Z/G^{\text{ss}}$, which is also a homogeneous space of $G^{\text{tor}}\times T$ with geometric stabilizer $\bar H^{\text{torf}}$ and the fibers of $W\rightarrow W^\prime$ are homogeneous spaces of $G^{\text{ssu}}$ with geometric stabilizers $\bar H^{\text{ssu}}$.
\end{itemize}
Indeed, we can embed $H^{\text{tor}}$ in a quasi-trivial torus $T$ and consider the diagonal morphism $H\rightarrow G\times T$ induced by the inclusion $H\hookrightarrow G$ and the composition $H\rightarrow H^{\text{tor}}\rightarrow T$. Then we define $W=(G\times T)/H$, and $W\rightarrow Z$ is induced by the projection to the first coordinate. Define $W^{\prime}$ to be the quotient variety $W/G^{\text{ssu}}$ and we get what we want.
\begin{prop}\label{wa}
The fiber $W_P$ above a $K$-point $P\in W^\prime(K)$ satisfies weak approximation.
\end{prop}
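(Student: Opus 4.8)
The plan is to present $W_P$ as a homogeneous space under $G^{\text{ssu}}$ with connected geometric stabiliser and then to strip off, in two successive dévissages, the unipotent radical and the semisimple part of the acting group, reducing weak approximation for $W_P$ to weak approximation for affine spaces and for semisimple simply connected groups — both of which are available over our fields — together with one local–global surjectivity statement. Concretely, recall from the construction of $W\to W'$ that $W_P$ is a $K$-homogeneous space under $G^{\text{ssu}}$ whose geometric stabiliser $\bar H^{\text{ssu}}$ is an extension of the semisimple group $\bar H^{\text{ss}}$ by the unipotent group $\bar H^{\text u}$; in particular it is connected. If $\prod_{v\in\Omega}W_P(K_v)=\varnothing$ there is nothing to prove, so from now on I assume $W_P$ has points everywhere locally, and I aim to show $W_P(K)$ is dense in $\prod_{v\in S}W_P(K_v)$ for every finite $S\subset\Omega$.

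First, the unipotent dévissage. Put $X:=W_P/G^{\text u}$; this is a homogeneous space under $G^{\text{ss}}$ with connected semisimple geometric stabiliser $\bar H^{\text{ss}}$, and the quotient map $\pi:W_P\to X$ is smooth and surjective. Each fibre of $\pi$ above a $K$-point $y$ of $X$ is a single $G^{\text u}$-orbit, hence a homogeneous space under the connected unipotent $K$-group $G^{\text u}$; such a space has a $K$-point (induct on $\dim G^{\text u}$ using a central copy of $\Ga$, whose homogeneous spaces are $\Ga$ or a point) and is therefore $K$-isomorphic to an affine space $\A^N_K$. Now given $(x_v)\in\prod_{v\in S}W_P(K_v)$ and $v$-adic neighbourhoods $U_v\ni x_v$, the sets $\pi(U_v)$ are open (the $v$-adic version of openness of smooth morphisms), so by weak approximation for $X$ — established below — there is $y\in X(K)$ with $y\in\bigcap_{v\in S}\pi(U_v)$; then $\pi^{-1}(y)\cong\A^N_K$ satisfies weak approximation and $\pi^{-1}(y)(K_v)\cap U_v\neq\varnothing$ for all $v\in S$, so a suitable point of $\pi^{-1}(y)(K)\subset W_P(K)$ lies in every $U_v$. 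Hence it suffices to prove weak approximation for $X$.

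Second, the reduction using the torsor $G^{\text{ss}}\to X$. One first needs $X(K)\neq\varnothing$: this is the Hasse principle for $X$, valid here because $G^{\text{ss}}$ is simply connected and the stabiliser $\bar H^{\text{ss}}$ is connected (see \cite{izquierdo2021local}). Fix $x_0\in X(K)$ and set $M:=\mathrm{Stab}_{G^{\text{ss}}}(x_0)$, a semisimple $K$-group, so $p:G^{\text{ss}}\to X$ is an $M$-torsor. Because $H^1(K,G^{\text{ss}})=1$ and $H^1(K_v,G^{\text{ss}})=1$, the characteristic maps $X(K)\to H^1(K,M)$ and $X(K_v)\to H^1(K_v,M)$ are surjective, and the latter is locally constant with open fibres, which are precisely the $G^{\text{ss}}(K_v)$-orbits on $X(K_v)$. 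Consequently, for $(x_v)\in\prod_{v\in S}X(K_v)$ with classes $\xi_v\in H^1(K_v,M)$: once one knows that $(\xi_v)_{v\in S}$ comes from a class $\xi\in H^1(K,M)$, pick $x_1\in X(K)$ of class $\xi$; then each $x_v=g_v\cdot x_1$ with $g_v\in G^{\text{ss}}(K_v)$, and weak approximation for the semisimple simply connected group $G^{\text{ss}}$ yields $g\in G^{\text{ss}}(K)$ close to all the $g_v$, so $g\cdot x_1\in X(K)$ approximates $(x_v)$. Thus everything reduces to surjectivity of $H^1(K,M)\to\prod_{v\in S}H^1(K_v,M)$ for every finite $S$.

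For this last point I would pass to the simply connected cover $1\to\mu\to\widetilde M\to M\to 1$ with $\mu$ a finite central $K$-group. Since $H^1(\,\cdot\,,\widetilde M)=1$ over $K$ and over every $K_v$ (these fields have cohomological dimension $2$), the abelianisation maps identify $H^1(K_v,M)$ bijectively with the $\widetilde M$-neutral classes in $H^2(K_v,\mu)$, and $H^1(K,M)$ surjects onto the corresponding subset of $H^2(K,\mu)$ (Borovoi's surjectivity of $\mathrm{ab}^1$ over fields of cohomological dimension $\le 2$); the problem becomes surjectivity of $H^2(K,\mu)\to\prod_{v\in S}H^2(K_v,\mu)$. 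This I would deduce from the Poitou–Tate-style duality for finite modules over fields of type (a) or (b) (\cite{izquierdo2019dualite}; for type (a), \S 10 of \cite{Colliot_Th_l_ne_2015}): the cokernel of $H^2(K,\mu)\to\bigoplus_{v\in\Omega}H^2(K_v,\mu)$ is dual to a finite group, so a finite tuple of local classes can be completed to a globally admissible family by adjusting at one auxiliary place $v_0\notin S$, using local duality at $v_0$ together with the divisibility of $\QQ/\ZZ$ to realise the required correction. I expect this step — the surjectivity of $H^1(K,M)\to\prod_{v\in S}H^1(K_v,M)$, and with it the input $X(K)\neq\varnothing$ — to be the main obstacle: it is exactly the kind of delicate arithmetic local–global statement that the rest of the paper is built to handle, whereas the two dévissages above are essentially formal.
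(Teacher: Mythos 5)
Your overall architecture is sound and genuinely different in bookkeeping from the paper's: you fibre $W_P$ over $X=W_P/G^{\text u}$ with affine-space fibres and then use the torsor $G^{\text{ss}}\to X$, whereas the paper works directly with the orbit sequence for $G^{\text{ssu}}$ acting on $W_P$ (passing from $H^{\text{ssu}}$ to $H^{\text{ss}}$ by Sansuc's Lemme 1.13) and quotes Proposition 3.1 of \cite{izquierdo2021local} for $W_P(K)\neq\emptyset$. Both routes end at the same two inputs: weak approximation for the simply connected group and surjectivity of a localization map on $H^1$ of the stabilizer. Two small inaccuracies along the way: the stabilizer $M$ of $x_0$ in $G^{\text{ss}}$ is in general only an extension of a form of $\bar H^{\text{ss}}$ by a unipotent group, not semisimple (harmless, again by Lemme 1.13 of \cite{sansuc1981groupe}), and the nonemptiness of $X(K)$ (resp.\ $W_P(K)$) should be attributed to Prop.\ 3.1 of \cite{izquierdo2021local} rather than to a general ``Hasse principle for connected stabilizers'', which is not unconditional even over number fields.

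The genuine gap is in your key step, the surjectivity of $H^1(K,M)\to\prod_{v\in S}H^1(K_v,M)$. As written, you only claim that $H^1(K,M)$ surjects onto the subset of neutral classes of $H^2(K,\mu)$; then surjectivity of $H^2(K,\mu)\to\prod_{v\in S}H^2(K_v,\mu)$ does not suffice, because a global class lifting the given locally neutral classes need not itself lie in the image of $H^1(K,M)$. What is actually needed is that the boundary map $H^1(\cdot,M)\to H^2(\cdot,\mu)$ be onto, over $K$ and over each $K_v$, and this is \emph{not} a consequence of cohomological dimension $\le 2$ alone: ``surjectivity of $\mathrm{ab}^1$'' in that generality is false --- for $M=\mathrm{PGL}_2$ it would say that every $2$-torsion Brauer class is represented by a quaternion algebra, which fails over Merkurjev's fields of cohomological dimension $2$ where index can exceed exponent. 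The correct input is precisely property (ii) of the fields considered here (index $=$ exponent), through Theorem 2.1 of \cite{colliot2004arithmetic}: this is what the paper uses (via the adjoint isogeny, the four lemma and $H^1(\cdot,H^{\text{sc}})=1$) to obtain the bijections $H^1(\cdot,H^{\text{ss}})\simeq H^2(\cdot,F)$, after which the Poitou--Tate type surjectivity of $H^2(K,F)\to\oplus_{v\in S}H^2(K_v,F)$ (Theorem 2.7 of \cite{izquierdo2016theoremes}; essentially your auxiliary-place argument) transfers to $H^1$. With that substitution, and the cosmetic fixes above, your proof goes through.
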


\begin{proof}

Since $H^{\text{ss}}$ is semi-simple, we consider its simply connected covering $1\rightarrow F\rightarrow H^{\text{sc}}\rightarrow H^{\text{ss}}\rightarrow 1$ where $F$ is finite and $H^{\text{sc}}$ is simply connected. 
\par
We first prove that this covering induces isomorphisms $H^1(K,H^{\text{ss}})\simeq H^2(K,F)$ and $H^1(K_v,H^{\text{ss}})\simeq H^2(K_v,F)$ for all $v\in\Omega$. For $K$ and $K_v$, the two conditions in Theorem 2.1 of \cite{colliot2004arithmetic} are satisfied, and thus we have a bijection $H^1(K,H^\text{ad})\rightarrow H^2(K,\mu)$ coming from the central isogeny $$1\rightarrow\mu\rightarrow H^{\text{sc}}\rightarrow H^\text{ad}\rightarrow1$$ associated to the center $\mu$ of $H^{\text{sc}}$. Since $F$ is contained in the center $\mu$, we have the exact sequence $$1\rightarrow \mu/F\rightarrow H^{\text{ss}}\rightarrow H^\text{ad}\rightarrow1$$ and the commutative diagram with exact rows
\begin{center}
    \begin{tikzcd}
{H^1(K,\mu/F)} \arrow[d, no head, double] \arrow[r] & {H^1(K,H^{\text{ss}})} \arrow[d] \arrow[r] & {H^1(K,H^\text{ad})} \arrow[d, no head, double] \arrow[r] & {H^2(K,\mu/F)} \arrow[d, no head, double] \\
{H^1(K,\mu/F)} \arrow[r]                                & {H^2(K,F)} \arrow[r]                & {H^2(K,\mu)} \arrow[r]                                   & {H^2(K,\mu/F).}                               
\end{tikzcd}
\end{center}
Then the four-lemma gives the surjectivity of $H^1(K,H^{\text{ss}})\rightarrow H^2(K,F)$. The injectivity comes from the vanishing of $H^1(K,H^{\text{sc}})$. The proof is the same when $K$ is replaced by $K_v$.

\par Then, we  prove that $$H^2(K,F)\rightarrow \oplus_{v\in S}H^2(K_v,F)$$ is surjective, where $S$ is a finite set of places. 
This is proved by chasing the following commutative diagram
\begin{center}
    \begin{tikzcd}
                               & {\oplus_{v\notin S}H^2(K_v,F)} \arrow[d] \arrow[rd,two heads] &                             &   \\
{H^2(K,F)} \arrow[r] \arrow[d] & {\oplus_{v\in\Omega}H^2(K_v,F)} \arrow[r] \arrow[d] & {H^0(K,\widehat F)^D} \arrow[r] & 1. \\
{H^2(K,F)} \arrow[r]           & {\oplus_{v\in S}H^2(K_v,F)} \arrow[d]               &                             &   \\
                               & 1                                                   &                             &  
\end{tikzcd}
\end{center}
The row in the middle is exact (c.f. \cite{izquierdo2016theoremes} Theorem 2.7 applying $d=0$. The surjectivity of $\oplus_{v\notin S}H^2(K_v,F)\rightarrow H^0(K,\hat F)^D$ comes from the injectivity taking the duals $H^0(K,\hat F)\hookrightarrow\oplus_{v\notin S} H^0(K_v,\hat F)$. This yields the surjectivity of $H^2(K,F)\rightarrow \prod_{v\in S}H^2(K_v,F)$,  which is $H^1(K,H^{\text{ss}})\rightarrow \oplus_{v\in S}H^1(K_v,H ^{\text{ss}})$.
\par
Finally, by Lemme 1.13 of \cite{sansuc1981groupe}, we have $H^1(K,H^{\text{ss}})=H^1(K,H^{\text{ssu}})$ and $H^1(K_v,H^{\text{ss}})=H^1(K_v,H^{\text{ssu}})$. With the same argument as in Proposition 3.2 of \cite{sansuc1981groupe}, we can prove that $G^{\text{ssu}}$ satisfies weak approximation (c.f. Proposition 4.1 of \cite{borel1966algebraic} for the vanishing of $H^1(K,H^u)$ and $H^1(K_v,H^u)$. The unipotent radical $H^{\text{u}}$ satisfies weak approximation).
Therefore, we consider the commutative diagram with exact rows (the set $W_P(K)$ is non-empty by Proposition 3.1 of \cite{izquierdo2021local}):
\begin{center}
    \begin{tikzcd}
G^{\text{ssu}}(K) \arrow[r] \arrow[d]       & W_P(K) \arrow[r] \arrow[d]       & {H^1(K,H^{\text{ssu}})} \arrow[r] \arrow[d, two heads] & 1 \\
\prod_{v\in S}G^{\text{ssu}}(K_v) \arrow[r] & \prod_{v\in S}W_P(K_v) \arrow[r] & {\prod_{v\in S}H^1(K_v,H^{\text{ssu}})} \arrow[r]         & 1.
\end{tikzcd}
\end{center}
A diagram chasing then gives the desired result. 
\end{proof}
\begin{prop}
We have $W(K_{\Omega})^{\Be_\omega(W)}=\overline{W(K)}$.
\end{prop}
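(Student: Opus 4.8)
The plan is to build on the structure $Z \leftarrow W \rightarrow W'$ and reduce the weak approximation statement for $W$ to the already-established results for the base $W'$ (a homogeneous space under a torus, handled essentially as a quasi-trivial-torus torsor over something controlled by the torus case) and for the fibers $W_P$ (which satisfy weak approximation by Proposition \ref{wa}). Concretely, I would consider the fibration $\pi\colon W \rightarrow W'$ and analyze the diagram relating $W(K)$, $W'(K)$, $W(K_\Omega)$, $W'(K_\Omega)$ together with the Brauer groups $\Be_\omega(W)$ and $\Be_\omega(W')$. The first step is to understand the map $\Br_{\mathrm a}(W') \rightarrow \Br_{\mathrm a}(W)$: since the fibers are homogeneous spaces of $G^{\mathrm{ssu}}$ with vanishing $\Br_{\mathrm a}$ on the relevant unipotent/simply-connected pieces, one expects $\Br_{\mathrm a}(W') \xrightarrow{\sim} \Br_{\mathrm a}(W)$, or at least that it is an isomorphism after restricting to $\Be_\omega$; this is what lets me transport the Brauer condition between $W$ and $W'$. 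I would also need to know that $W'(K) \neq \emptyset$ and $W_P(K)\neq\emptyset$, which follows from Proposition 3.1 of \cite{izquierdo2021local} as cited.

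Next I would set up the key commutative diagram: a finite set $S \subset \Omega$, the exact-ish sequence of pointed sets coming from $\pi$, namely $W_P(K) \to W(K) \to W'(K)$ and the local analogues $\prod_{v\in S} W_P(K_v) \to \prod_{v\in S} W(K_v) \to \prod_{v \in S} W'(K_v)$, glued to the Brauer duals $\Be_\omega(W)^D$ and $\Be_\omega(W')^D$. The idea is: given a point $(P_v) \in W(K_\Omega)^{\Be_\omega(W)}$, its image in $W'(K_\Omega)$ lies in $W'(K_\Omega)^{\Be_\omega(W')}$ by functoriality of the Brauer pairing together with the identification $\Be_\omega(W') \simeq \Be_\omega(W)$; by the already-known case for $W'$ (the torus-type homogeneous space, via the quasi-trivial torus torsor $W' \to$ base and the tori results of \cite{Colliot_Th_l_ne_2015}, \cite{izquierdo2019dualite}), this image is in $\overline{W'(K)}$, so we may approximate it at the places in $S$ by a rational point $P' \in W'(K)$. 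Then I lift: after moving $(P_v)$ slightly (using that $W \to W'$ is smooth, so the image is open and one can adjust within the $S$-topology), I may assume all the chosen local points lie in the fiber $W_{P'}$ over the single rational point $P'$, and now Proposition \ref{wa} says $W_{P'}$ satisfies weak approximation, so I find a rational point of $W_{P'} \subset W$ that is $S$-close to $(P_v)$. Since $S$ was arbitrary, $(P_v) \in \overline{W(K)}$.

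One technical point that needs care throughout: $W(K_\Omega)$ and the Brauer set are topological objects, and $\Be_\omega$ is an infinite condition, so passing from "orthogonal to all of $\Be_\omega(W)$" to a finite-level statement requires the usual argument (as in Lemma 9.6 / Corollary 9.9 of \cite{Colliot_Th_l_ne_2015}) that $\Be_\omega$ is built from the $\Be_S$ by a limit, and that the pairing is continuous, so that orthogonality can be checked place-by-place after choosing $S$ large enough to contain the support of a given Brauer class. I would also need the reverse inclusion $\overline{W(K)} \subseteq W(K_\Omega)^{\Be_\omega(W)}$, but that is the general Brauer--Manin obstruction being well-defined (the pairing kills $W(K)$ and is continuous), so it is automatic.

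The main obstacle I expect is the lifting/approximation step at the interface of the two fibrations: one must be careful that after replacing the image point by a nearby rational point $P' \in W'(K)$, the original local points $(P_v)$ can genuinely be replaced — within the $S$-adic topology, which is all we need — by local points of the single fiber $W_{P'}$. This uses smoothness of $\pi$ and the implicit function theorem over each $K_v$ (openness of $\pi$ on $K_v$-points), exactly the kind of argument underlying the birational invariance of weak approximation quoted in Section \ref{notation}; making the bookkeeping of "which places, how close, which fiber" precise, while simultaneously keeping track of the Brauer condition descending correctly along $\pi$, is where the real work lies. A secondary subtlety is verifying $\Br_{\mathrm a}(W') \simeq \Br_{\mathrm a}(W)$ (hence the compatibility $\Be_\omega(W') \simeq \Be_\omega(W)$): this should follow from Sansuc-style computations (Lemme 6.6, Prop. 6.10 of \cite{sansuc1981groupe}) applied to the $G^{\mathrm{ssu}}$-homogeneous-space fibers, using that $G^{\mathrm{ss}}$ is simply connected so its Brauer group is trivial, but it must be checked rather than assumed.
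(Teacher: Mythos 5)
Your proposal follows essentially the same route as the paper: push $(P_v)$ down along the smooth map $g\colon W\to W'$, use that $W'$ is of torus type (the paper simply cites Theorem 3.2 of \cite{izquierdo2021local} to say $W'$ is a torus, so $W'(K_\Omega)^{\Be_\omega(W')}=\overline{W'(K)}$), pick a rational point $P'$ in the open image of the given neighborhood, and finish with weak approximation for the fiber $W_{P'}$ from Proposition \ref{wa}. One remark: the isomorphism $\Br_{\text{a}}(W')\simeq\Br_{\text{a}}(W)$ (or $\Be_\omega(W')\simeq\Be_\omega(W)$) that you flag as needing verification is not actually needed — functoriality of the Brauer--Manin pairing under pullback along $g$ (which maps $\Be_\omega(W')$ into $\Be_\omega(W)$) already shows that the image of $(P_v)$ is orthogonal to $\Be_\omega(W')$, and this is all the paper uses.
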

\begin{proof}
Consider $(P_v)\in W(K_{\Omega})^{\Be_\omega(W)}$, and any open subset $U$ containing $(P_v)$, we want to find $Q\in U\cap W(K)$ using fibration methods.
\par Denote by $(P_v^\prime)$ the image of $(P_v)$ under the induced map $g: W(K_\Omega)\rightarrow W^\prime(K_\Omega)$. By the functorality of the Brauer-Manin pairing, $(P_v^\prime)$ lies in $W^\prime(K_{\Omega})^{\Be_\omega(W^\prime)}$. Since $W\rightarrow W^
\prime$ is smooth, and the $K_v$ are henselian, we have open maps $W(K_v)\rightarrow W^\prime(K_v)$, and thus $g$ is open. In particular, $V=g(U)\subseteq W^\prime(K_\Omega)$ is also open. Since $W'$ is a torus (c.f. Theorem 3.2 of \cite{izquierdo2021local}), we have $W^\prime(K_{\Omega})^{\Be_\omega(W^\prime)}=\overline{W^\prime(K)}$. Therefore, there exists $P^\prime\in V\cap W^\prime(K)$. Let $(Q_v)\in g^{-1}(P^{\prime})\cap U\subseteq W(K_\Omega)$, and it can be seen as in $ W_{P^\prime }(K_\Omega)$ too, as shown in the following diagram.
\begin{center}
    \begin{tikzcd}
                                                                                   & W \arrow[r] \arrow[rd, phantom,"\square"] & W^\prime                       \\
                                                                                   & W_{P^\prime} \arrow[u] \arrow[r]         & \Spec K \arrow[u, "P^\prime "] \\
\coprod_{v\in\Omega}\Spec K_v \arrow[ruu, "(Q_v)"'] \arrow[rru] \arrow[ru, dotted] &                                 &                               
\end{tikzcd}
\end{center}
This diagram also shows that $W_{P^\prime}(K_\Omega)\subseteq W(K_\Omega)$, and $U_{|_{ W_{P^\prime}(K_\Omega)}}:=U\bigcap W_{P^\prime}(K_\Omega)$ is an open neighborhood of $(Q_v)$ in $W_{P^\prime}(K_\Omega)$. Since $W_{P^\prime}$ satisfies weak approximation by proposition \ref{wa}, there exists $Q\in W_{P^\prime}(K)\cap U_{|_{ W_{P^\prime}(K_\Omega)}}$, and thus in $W(K)\cap U$, which proves that $W(K_{\Omega})^{\Be_\omega(W)}\subseteq\overline{W(K)}$.
\end{proof}
As in \cite{izquierdo2021local}, we are naturally led to consider the following definition.
\begin{defn}
For an arbitrary $K$-variety $Z$, we define $$Z(K_\Omega)^{\text{\text{qt}},\Be_\omega}:=\bigcap_{f:W\xrightarrow T{Z},T\text{ quasi-trivial}}f(W(K_\Omega)^{\Be_\omega}).$$
\end{defn} Using the torsor $W\rightarrow Z$ defined above, we have $Z(K_\Omega)^{\text{qt},\Be_\omega}\subseteq\overline{Z(K)}$. In fact, this is an equality.
\begin{prop}\label{thm3}
Let the notation be as above. Then $Z(K_\Omega)^{\text{qt},\Be_\omega}=\overline{Z(K)}.$
\end{prop}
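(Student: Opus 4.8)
The plan is to prove the nontrivial inclusion $\overline{Z(K)}\subseteq Z(K_\Omega)^{\text{qt},\Be_\omega}$, since the reverse inclusion was already observed above using the specific torsor $W\to Z$. So let $f\colon W'\to Z$ be an \emph{arbitrary} torsor under a quasi-trivial torus $T'$; I must show $\overline{Z(K)}\subseteq f(W'(K_\Omega)^{\Be_\omega})$. The starting point is the twisting technique for torsors under tori: for a point $(P_v)\in Z(K_\Omega)$ coming from the closure of $Z(K)$, I want to lift it to a $K_\Omega$-point of a twist $W'_\sigma$ of $W'$ by a cocycle $\sigma\in H^1(K,T')$, and then check that this lift lies in the Brauer set $W'_\sigma(K_\Omega)^{\Be_\omega}$. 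Because $T'$ is quasi-trivial, each twist $W'_\sigma\to Z$ is again a torsor under $T'$ (tori have no outer twisting), so $f(W'_\sigma(K_\Omega)^{\Be_\omega})\subseteq f'(W'(K_\Omega)^{\Be_\omega})$ in the relevant sense; this is the standard \emph{descent} reduction and I would cite Skorobogatov's descent formalism / the analogous step in \cite{izquierdo2021local}.

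The key steps, in order, are as follows. First, reduce to the previous proposition: I have already shown $W(K_\Omega)^{\Be_\omega(W)}=\overline{W(K)}$ for the distinguished torsor $W\to Z$ under the quasi-trivial torus $T$; pushing forward along $W\to Z$ gives $Z(K_\Omega)^{\text{qt},\Be_\omega}\subseteq f_W(\overline{W(K)})\subseteq\overline{Z(K)}$, which is the easy inclusion. Second, for the hard inclusion, take $(P_v)\in\overline{Z(K)}$ and an arbitrary quasi-trivial-torus torsor $f'\colon W'\to Z$; since $T'$ is quasi-trivial, $H^1(K_v,T')=0$ for every $v$ (Hilbert 90 plus the induced-module structure), so \emph{every} local point $P_v$ lifts to $W'(K_v)$, i.e. the local evaluation map $W'(K_v)\to Z(K_v)$ is surjective, and hence $(P_v)$ itself lifts to some $(Q_v)\in W'(K_\Omega)$ with $f'((Q_v))=(P_v)$. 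Third — and this is where the Brauer condition must be verified — I must arrange the lift $(Q_v)$ so that it is orthogonal to $\Be_\omega(W')$. Here I use that $(P_v)$ lies in $\overline{Z(K)}$, not merely in $Z(K_\Omega)$: approximate $(P_v)$ by a rational point $x\in Z(K)$ on the relevant finite set of places, lift $x$ to a $K$-point (or at least to a twist $W'_\sigma$ that acquires a $K$-point, using $H^1(K,T')$ finiteness-type arguments as in Proposition~\ref{thm2}), and transport the orthogonality from the rational point; because $T'$ is quasi-trivial the ambiguity in the lift is controlled by $\prod_v T'(K_v)$, which acts transitively on the fibers and preserves the Brauer pairing up to the image of the global $T'(K)$, so the Brauer obstruction on $W'$ descends to the (vanishing) one on $Z$. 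Finally, conclude that $(Q_v)\in W'(K_\Omega)^{\Be_\omega}$, hence $(P_v)=f'((Q_v))\in f'(W'(K_\Omega)^{\Be_\omega})$; intersecting over all such $f'$ gives $\overline{Z(K)}\subseteq Z(K_\Omega)^{\text{qt},\Be_\omega}$.

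The main obstacle I anticipate is the third step: controlling the Brauer class of the chosen lift $(Q_v)$. The subtlety is that $\Be_\omega(W')$ need not be generated by $\Be_\omega(Z)$ together with the Brauer classes coming from $T'$, so orthogonality of $(P_v)$ against $\Be_\omega(Z)$ does not by itself force orthogonality of any lift against $\Be_\omega(W')$. The resolution should be that one is allowed to \emph{change the torsor by a twist}: the family $\{W'_\sigma\}_{\sigma\in H^1(K,T')}$ sweeps out enough lifts, and a cocycle-computation (the "type of a torsor" formalism, cf. the partition of $Z(K_\Omega)$ into classes indexed by $H^1(K,T')$) shows that at least one twist $W'_\sigma$ has a $K_\Omega$-point lying over $(P_v)$ \emph{and} annihilating $\Be_\omega(W'_\sigma)$; since $T'$ is quasi-trivial all these twists are isomorphic as $Z$-schemes to $W'$ over each $K_v$, so $f'(W'_\sigma(K_\Omega)^{\Be_\omega})\subseteq f'(W'(K_\Omega)^{\Be_\omega})$. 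Making this last comparison precise — identifying the twist with the original torsor $K_\Omega$-locally and tracking the pairing — is the technical heart, and I would handle it by the same diagram-chase used for the distinguished $W$, now relative to $W^\prime_\sigma$, invoking the weak-approximation input from Proposition~\ref{wa} for the fibers once a global point on $W^\prime$ (equivalently on $W'_\sigma$) has been produced.
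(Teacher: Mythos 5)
Your skeleton is the same as the paper's: the inclusion $Z(K_\Omega)^{\text{qt},\Be_\omega}\subseteq\overline{Z(K)}$ comes from the distinguished torsor $W\to Z$ and the preceding proposition, and the real content is that any point of $\overline{Z(K)}$ lifts to a point of $W'(K_\Omega)$ orthogonal to $\Be_\omega(W')$ for an \emph{arbitrary} torsor $f'\colon W'\to Z$ under a quasi-trivial torus $T'$ (the paper packages this as: $f'(W'(K_\Omega)^{\Be_\omega})$ is closed, using that $f'$ is open and surjective on $K_v$-points). But at the step you yourself call the technical heart there is a genuine gap. What is needed is that for $\alpha\in\Be_\omega(W')$ the value $\sum_v\langle x_v,\alpha\rangle$ depends only on $f'((x_v))\in Z(K_\Omega)$; since two lifts of the same adelic point differ by the fiberwise action of some $(t_v)\in T'(K_\Omega)$, this amounts to $\sum_v\langle t_v,i_w^*\alpha\rangle=0$ for \emph{every} $(t_v)\in T'(K_\Omega)$, where $i_w\colon T'\to W'$ is the orbit map of a rational point $w$ of $W'$. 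That vanishing rests on two inputs you never supply: the decomposition $m^*\alpha=p_{T'}^*i_w^*\alpha+p_{W'}^*\alpha$ of the pullback of $\alpha$ along the action map (Lemme 6.6 of \cite{sansuc1981groupe} together with formula (24) of \cite{borovoi2013manin}), and the fact that for a quasi-trivial torus the map $T'(K_\Omega)\to\Be_\omega(T')^D$ is zero, which follows from $\Be_\omega(T')\simeq\Be(T')$ and the exact sequence of Theorem \ref{4.1} (equivalently, from weak approximation for $T'$ plus local constancy of the pairing). Your formulation that the action of $\prod_v T'(K_v)$ ``preserves the Brauer pairing up to the image of the global $T'(K)$'' is not correct as stated and misses the point: the ambiguity in the lift is by arbitrary adelic points of $T'$, not global ones, and quasi-triviality is used precisely to show that arbitrary adelic shifts leave the sum unchanged.

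Moreover, the machinery you propose to fill this gap does not help. Since $T'$ is quasi-trivial, $H^1(K,T')=0$ and $H^1(K_v,T')=0$ (Shapiro plus Hilbert 90), so there are no nontrivial twists $W'_\sigma$, no partition of $Z(K_\Omega)$ by types to exploit, and no role for ``finiteness of $H^1$''-type arguments as in Proposition \ref{thm2}: every rational or local point of $Z$ lifts on the nose, as you in fact note earlier. Likewise Proposition \ref{wa} concerns the fibers of the specific map $W\to W'$ of the construction (homogeneous spaces of $G^{\text{ssu}}$) and says nothing about an arbitrary quasi-trivial torsor over $Z$, whose fibers are $T'$-torsors; what you need about those fibers is exactly the pairing computation above, not a weak-approximation statement. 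Once that computation is supplied, your argument (approximate $(P_v)$ by $x\in Z(K)$ at the finitely many places where $\alpha$ is locally nonzero, lift $x$ to $W'(K)$, and compare with any lift of $(P_v)$) does close, and it then coincides in substance with the paper's proof.
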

\begin{proof}
Since for all $f$, we have $$Z(K)\subseteq f(W(K))\subseteq f(W(K_\Omega)^{\Be_\omega}),$$ so $Z(K)\subseteq Z(K_\Omega)^{\text{qt},\Be_\omega}$. To prove Proposition \ref{thm3}, it is equivalent to prove that $Z(K_\Omega)^{\text{qt},\Be_\omega}$ is closed. It suffices to prove that $f(W(K_\Omega)^{\Be_\omega})$ is closed for every torus $f: W\xrightarrow T{Z}$ with $T$ quasitrivial.
Since $f$ is smooth and the $K_v$ are henselian, the induced map $f:W(K_\Omega)\rightarrow Z(K_\Omega)$ is open. Now we'll prove that $f(W(K_\Omega)^{\Be_\omega})$ doesn't meet $f(W(K_\Omega)\backslash W(K_\Omega)^{\Be_\omega})$, and combined with the surjectivity of $f$, we'll get $f(W(K_\Omega)^{\Be_\omega})$ closed.
\par
 Fixing $w\in W(K)$ which is a $K$-point of $W$, we have the canonical isomorphism $\Br_{\text{a}}(W)\simeq \Br_{1,w}(W)$. Define the map $i_w: T\rightarrow W$ as $t\mapsto t.w$, and we have an induced map $i_w^*: \Br_{1,w}(W)\rightarrow\Br_{1,e}(T)$. 
Let $\alpha$ be an element in $\Be_\omega(W)$ and we still denote by $\alpha$ its image in $\Br_{1,w}(W)$. Let $(x_v),(y_v)\in W(K_\Omega)$ above $(P_v)$. We'll prove that $\sum_{v\in\Omega}<x_v,\alpha>=\sum_{v\in\Omega}<y_v,\alpha>$. Let $m: T\times W\rightarrow W$ denotes the action of $T$ on $W$. 
Since $W_{P_v}$ is a $K_v$-torsor under $T$, there exists $t_v\in T(K_v)$ such that $x_v=t_v.y_v$. By the functorality of the Brauer-Manin paring, we have $<x_v,\alpha>=<(t_v,y_v),m^* \alpha>$. By Lemme 6.6 of \cite{sansuc1981groupe}, we have $\Br_{1,(w,e)}(W\times T)=\Br_{1,w}(W)\times\Br_{1,e}(T)$, and the formula (24) in \cite{borovoi2013manin} gives $m^*\alpha=p_T^*i_w^*(\alpha)+p_W^*\alpha$ where $p_T$ and $p_W$ are the two natural projections. Then $<x_v,\alpha>=<t_v,i_w^*(\alpha)>+<y_v,\alpha>$. But $P$ is a quasi-trivial torus, we have $\Be_\omega(P)\simeq \Be(P)$ (c.f. Prop. 2.6 and \S 8.1 of \cite{Colliot_Th_l_ne_2015}). Therefore, by the exact sequence in Theorem \ref{4.1}, the map $P(K_\Omega)\rightarrow \Be_\omega(P)^D$ is $0$, i.e. $\sum_{v\in\Omega}<t_v,i_w^*(\alpha)>=0$ and we get what we want.
\end{proof}
\begin{remark}
Actually the above proof also shows that \begin{equation*}
Z(K_{\Omega})^{\text{qt},\Be_\omega}=\bigcap_{f:W\xrightarrow T{Z},T\text{ quasi-trivial}}\bigcap_{\alpha\in{\Be_\omega}(W)}f(W(K_\Omega)^{\alpha}).
\end{equation*} 
 With this description, we can prove as done in Theorem 6.4 of \cite{izquierdo2021local} that 
\begin{equation}\label{inclusion}Z(K_\Omega)^{\text{tor}}\subseteq Z(K_\Omega)^{\text{qt},\Be_\omega}.\end{equation} But this doesn't necessarily give an obstruction to weak approximation since we don't know if $Z(K_\Omega)^{\text{tor}}$ is closed. We don't know if (\ref{inclusion}) is an equality. One could wonder whether a ``purely descent'' description of $Z(K_\Omega)^{\text{qt},\Be_\omega}$ exists.
\end{remark}
\bibliographystyle{alpha}
\bibliography{bib}
\end{document}